\newcommand{\Q}{{\mathbb Q}}
\newcommand{\Z}{{\mathbb Z}}
\newcommand{\C}{{\mathbb C}}
\newcommand{\G}{{\mathbb G}}
\newtheorem{theorem}{Theorem}
\newtheorem{lemma}{Lemma}
\newtheorem{corollary}{Corollary}
\newtheorem{proposition}[theorem]{Proposition}
\numberwithin{theorem}{section}
\numberwithin{corollary}{section}
\numberwithin{lemma}{section}
\theoremstyle{definition}
\numberwithin{conj}{section}
\newtheorem{example}{Example}
\newtheorem*{acknowledgement}{Acknowledgement}
\numberwithin{example}{section}
\numberwithin{definition}{section}
\numberwithin{question}{section}
\numberwithin{equation}{section}
\theoremstyle{remark}
\newtheorem{remark}{Remark}
\numberwithin{remark}{section}
\begin{document}

\title[Locally potentially equivalent Galois representation]{On the
  structure of locally \\
potentially equivalent Galois representations}

\author{Vijay M. Patankar}
\address{Department of Mathematics, BITS Pilani, Goa campus,
  Zuarinagar, Goa, India 403726.\\
email: vijaypatankar@gmail.com}  
\author{C. S. Rajan}
\address{School of Mathematics, Tata Institute of Fundamental
  Research, Dr. Homi Bhabha Road, Colaba, Bombay, INDIA 400005.\\
email: rajan@math.tifr.res.in}

\keywords{Galois representations; Potential equivalence; Algebraic
Chebotarev density theorem} 
\subjclass{Primary 11F80; Secondary 11R45}

\begin{abstract} Suppose $\rho_1, \rho_2$ are 
two $\ell$-adic Galois representations of the absolute Galois group of
a number field, such 
that the algebraic monodromy group of one of the 
representations is connected and the representations 
are locally potentially equivalent at a
set of places of positive upper density. We classify such pairs of
representations and show that up to twisting by some
 representation, it is given by a pair of representations
one of which is trivial and the other abelian. 

Consequently, assuming that the first representation has connected
algebraic monodromy group, we obtain 
that the representations are  potentially equivalent, 
provided one of the following conditions hold: (a) the first
representation is absolutely irreducible; (b) the
ranks of the algebraic monodromy groups are equal; (c)
the algebraic monodromy group of the second 
representation is also connected and (d) the commutant of the image of the second representation remains the same upon restriction to subgroups of finite index of the Galois group. 

\end{abstract}

\maketitle

\section{Introduction}
Two $n$-dimensional linear representations $\rho_1, ~\rho_2$ of a
group  $\Gamma$ are said to be {\em potentially equivalent} if they
become isomorphic upon restriction to a subgroup of finite index in
$\Gamma$. In this article, we are interested in proving potential
equivalence  of $\ell$-adic representations of the absolute Galois group of a
global field, which are  locally potentially equivalent at a
sufficiently large number of places of the global field.

For a field $L$, denote by $G_L:={\rm Gal}(\bar{L}/L)$ an absolute
Galois group of $L$, where $\bar{L}$ denote a seperable algebraic
closure of $L$. Let  $K$ be a global field, i.e., 
a number field or the function field
of a curve over a finite field. For a place $v$ of $K$
let $K_v$ denote the completion of $K$ at $v$.  Choosing a place $w$
of $\bar{K}$ lying above $v$, allows us to identity $G_{K_v}$ with the
decomposition subgroup $D_w$ of $G_K$. As $w$ varies this gives a
conjugacy class of subgroups of $G_K$. Given a representation $\rho$
of $G_K$ as above, define the localization (or the local component) \(
\rho_v \) of $\rho$ at $v$, to be the representation of $G_{K_v}$
obtained by restricting $\rho$ to a decomposition subgroup. This is
well defined upto isomorphism.

Let \( F \) be a local field of characteristic zero and residue
characteristic \( \ell \) relatively prime to the characteristic of \(
K \).  Suppose $\rho_i: G_K \to GL_n(F), ~i=1,2$ are two continuous
semisimple \( \ell \)-adic representations of $G_K$,   unramified
outside a finite set $\Sigma_{ram}$ of places of $K$ containing the
archimedean places of $K$.  Let $T$ be a set of places of $K$.  Define
$\rho_1$ and $\rho_2$ to be {\em locally potentially equivalent at
$T$}, if for each $v\in T$, the restrictions $\rho_{1, v}$ and
$\rho_{2, v}$ to $G_{K_v}$ are potentially equivalent. 
 
 Given a continuous 
representation  $\rho: ~G_K\to GL_n(F)$ and a place
$v $ of $K$ where $\rho$ is unramified, let $\rho(\sigma_v)$ denote
the Frobenius conjugacy class in the image group $  G_K/{\rm
Ker}(\rho) \simeq \rho( G_K) \subset GL_n(F)$.  By an abuse of
notation, we will also continue to denote by \( \rho (\sigma_v) \) an
element in the associated conjugacy class. Assume further that the 
elements $ \rho_1 (\sigma_v)$ and $
\rho_2(\sigma_v)$ are semisimple (by Corollary \ref{semisimple}, 
this property is satisfied for semisimple representations at a set
of places of density one). The representations
$\rho_1$ and $\rho_2$ are locally potentially equivalent at a prime
$v$ not in $\Sigma_{ram}$,  precisely when the eigenvalues of the
Frobenius conjugacy classes $ \rho_1 (\sigma_v)$ and $
\rho_2(\sigma_v)$ differ by roots of unity.  

For a representation $\rho: G_K \to GL_n(F)$, let \( G_{\rho} \)  be
the algebraic  monodromy  group attached to \( \rho \) over \( F \),
i.e.,  the smallest algebraic subgroup $G_{\rho}$ of $GL_n$ defined
over $F$ such that $\rho(G_K)\subset G_{\rho}(F)$.  Denote by \( G_i
\) the algebraic monodromy groups associated to the representations \(
\rho_i \) for \( i = 1, 2 \). 

In \cite{Ra}, it is proved that if the representations 
$\rho_1$ and $\rho_2$ are locally equivalent (in fact, enough to
assume only that the character values agree evaluated on the Frobenius element)
 at a set of  unramified places having
positive upper density and the algebraic monodromy group of one of the
representations is connected, then 
$\rho_1$ and $\rho_2$ are   potentially equivalent. 
We recall the upper density of a set \( S \) of finite places 
of \( K \) is defined as:
\[ 
ud (S) := \limsup_{x \to \infty } \# \{ v \in S | ~ N v \leq x \}/ \pi (x) , 
\] 
where \( \pi (x ) \) is the number of finite places \( v \) of \( K
\) with  \( N v \leq x \). Here  \( N v \) denotes the cardinality
of the residue field $k_v$ of \( K_v \). 

Based on this result, we consider the following generalization:
assume that the algebraic monodromy group of either $\rho_1$ or
$\rho_2$ is connected and the set of places $v$ of $K$ where the
localizations  $\rho_{1,v}$ and $\rho_{2,v}$  are potentially
equivalent has positive upper density.  
Then are $\rho_1$ and $\rho_2$ (globally)  potentially equivalent? 

One of the motivations to consider this question is to understand the
distribution of Frobenius fields of elliptic curves. 
Let \(A\) be an abelian variety defined over  a number field \(K\).
 The Galois group $G_K$ acts in a natural manner on $A(\bar{K})$.
For a rational prime $\ell$, the Tate module
$T_\ell(A):=\varprojlim_n A[\ell^n]$ is the $G_K$-module obtained
as a projective  limit of the  $G_K$-modules $A[\ell^n]$ of
$\ell^n$-torsion points of $A$ over $\bar{K}$.  Let
$V_{\ell}(A)=T_\ell (A)\otimes_{\Z_l}\Q_l$. The Tate
module is of rank $2d$ over the ring of $\ell$-adic integers
$\Z_\ell$, where $d$ is the dimension of $A$. 

When $A$ is an elliptic curve defined over $K$, we have a  continuous $\ell$-adic representation
$\rho_{A,\ell}:G_K\to GL_2(\Q_\ell)$. In \cite{KPR}, 
the above question is answered in the affirmative  for the 
representations of $G_K$ acting on the Tate module of non-CM elliptic
curves. At an unramified finite  place
$v$ of $K$ for $A$,  
the Frobenius field of $A$ at $v$ is the subfield of algebraic numbers
generated by the eigenvalues of the Frobenius at $v$.   As an
application, it is shown that if the set of places of a number field 
$K$ at which the
Frobenius fields of two non-CM elliptic curves defined over $K$ are
isomorphic has positive upper density, then the curves are isogenous
over a finite extension of $K$. 

More generally, one can try to
understand the multiplicative identities satisfied by the eigenvalues
of a pair of Galois representations, say the natural
representations on the Tate module attached to a pair of abelian
varieties. We hope that the method outlined out here, might shed some
light on the questions raised in  (\cite{Ko}).

Coming back to the question raised above, it was claimed in (\cite{PR}) that 
if the algebraic monodromy group of one of the
representations is connected 
and the upper density of the set $T$ of places of $K$ at
which \( \rho_1 \) and \( \rho_2 \) are locally potentially 
equivalent is positive, then
the representations  $\rho_1$ and $\rho_2$
are potentially equivalent. 
It was pointed out by J.-P. Serre that this claim is wrong. The
proof given in (\cite{PR}) however 
goes through if we assume that the density of $T$ is one. 

The problem was that in the course of the proof (see Remark
\ref{remark:error}), 
the following assertion was made: let $G$
be a reductive algebraic group, and let $m$ be a number divisible by
the exponent of the group $G/G^0$. Then for any connected component
$G^{\phi}$ of $G$, the $m$-th power map $G^{\phi}\to G^0$ is
surjective, where $G^0$ is the connected component of identity in $G$. 

\begin{example}\label{example-serre}
Serre gave the following counter-example: Let $G$ be the
normalizer of the diagonal torus in $\textrm{SL}(2)$. The square map
sends the non-identity component to scalar matrices ($\pm \textrm{Id}
$) and is not surjective.  In terms of
representations, let $p_1$ denote the trivial two dimensional representation 
and $p_2$ the natural two dimensional representation of $G$. 
The characteristic polynomials of both these representations  are
equal evaluated at the 
fourth power of elements of the non identity connected component of
$G$. 
\end{example}

\begin{example} \label{Serre-arithexample}
Based on the above example, an arithmetical counterexample can be
given to the claim made in \cite{PR}.  Let $L=\Q(i)$ be the field of
Gaussian numbers, and fix an embedding  of $L$ into $\C$. Let $\ell$ be a rational prime that splits completely in $L$, and choose a prime $w$ of $L$ dividing $\ell$.  Associated
to an elliptic curve $E$ with  complex multiplication by $\Z[i]$,
there exists, by the theory of complex multiplication,  a continuous
character  $\psi: G_L\to L_w^*$ 
with the following property: at a finite  place $v$ of $L$ with
residue field characteristic coprime to $\ell$ and
unramified for $E$,  the Frobenius element $\pi_v:=\psi(\sigma_v)$ lies in 
$\Z[i]$, and generates a prime ideal in $\Z[i]$. Further
$\psi(\sigma_v)\psi^{\tau}(\sigma_v)$ is equal to $p$ (resp. to
$p^2$), when $Nv=p$ (resp. $Nv=p^2$), where $v$ divides the rational
prime $p$ of $\Q$ and  $\tau$ denotes  the non-trivial element of
${\rm Gal}(L/\Q)$. 

Let $\chi: G_{\Q}\to \Q_{\ell}^*$ denote the $\ell$-adic cyclotomic
character. We have for $p$ coprime to $\ell$,
$\chi(\sigma_p)=p$. Consider the two representations, 
\[ \rho_1=\chi\oplus \chi \quad \mbox{and}\quad \rho_2={\rm
  Ind}_{G_K}^{G_{\Q}}(\psi^2). \]
Let $T$ be the set of rational primes $p$ inert in $L$ and 
unramified for $\rho_2$.  The
characteristic polynomials of $\rho_1(\sigma_p)$ and
$\rho_2(\sigma_p)$ for $p\in T$, are respectively of the form $(X-p)^2$ and
$X^2+p^2$. Hence for $i=1, 2$, $\rho_i(\sigma_p)^4=p^4I$, where $I$ is
the identity matrix. Thus these representations are potentially
equivalent at $T$. 

The algebraic monodromy group $G_1$ of $\rho_1$ is isomorphic to
$\G_m$. For any natural number $m$, the characters $\psi^m$ and
$(\psi^{\tau})^m$ are not equal up to multiplication by a root of
unity, by considering the values of the character at places  of degree
one over $\Q$. It follows that the algebraic monodromy group  $G_2$ is
isomorphic to the normalizer of the Weil restriction of scalars
$R_{L/\Q}(\G_m)\subset GL_2$. Thus $G_1$ is connected, the
representations $\rho_1$ and $\rho_2$ are potentially equivalent at a
set of places of density $\frac{1}{2}$, but are not potentially
equivalent yielding a contradiction to Theorem 2.1 proved  in
\cite{PR}. 
\end{example}

\begin{example} \label{example:general}
Example \ref{example-serre} can be generalized. Let $Z\simeq \G_m^k$ be a split torus of dimension $k$ over a field $E$. The automorphism group of $\G_m^k$ can be identified with $GL(k,\Z)$. Let $\theta\in \mbox{Aut}(Z)$ be an automorphism of finite order, such that the $\theta$-invariants of $Z$ is a finite group of order $m$. Form the semi-direct product $Z<J>=Z\rtimes \Z/n\Z$, where the  
generator $1\in \Z/n\Z$ is denoted by $J$. The multiplication map is defined as, 
$(xJ^k)(yJ^l)=xJ^kyJ^{-k}J^{k+l}=x\theta^k(y)J^{k+l}$, where $x, y\in Z$. In particular, $JxJ^{-1}=\theta(x)$. 

Let $\rho$ be an absolutely irreducible representation of $ Z<J>$ of dimension greater than $1$. The image of $Z$ cannot be trivial, for otherwise, $\rho$ will factor via the group $\Z/n\Z$. Thus $\rho$ cannot be potentially trivial. On the other hand, it can be seen (see  \ref{subsection:examplegeneral}), that the elements of the coset $ZJ$ are of order at most $mn$. 

\end{example}

\begin{example}
Once there is a pair $(\rho_1, \rho_2)$ of representations of $G_K$
which fails to be potentially equivalent, then any twist
$(\rho_1\otimes \eta, \rho_2\otimes \eta)$, where $\eta$ is a finite
dimensional linear
representation of $G_K$, satisfies the hypothesis of being
locally potentially equivalent at $T$. Since the character of $\eta$ is 
non-vanishing in some neighbourhood of identity, the twists are not
potentially equivalent either. 

Further, if $K\supset K_0$, and the representations $\rho_1\otimes \eta$ and
$\rho_2\otimes \eta$ are respectively restrictions to $G_K$ 
of representations $\eta_1$ and
$\eta_2$ of $G_{K_0}$, then the pair of representations $(\eta_1,
\eta_2)$ will also provide a counterexample. These representations
will be locally
potentially equivalent at the unramified set of places $T_0$ of $K_0$
that lie below $T$, but will not be potentially equivalent.

\end{example}

\subsection{Theorems}
Our main theorem says that  the failure of $\rho_1$ and $\rho_2$
being potentially isomorphic stems from 
the presence of a summand generalizing  the
above examples, where upto a twist the first representation is trivial and the
second representation  is abelian. 

Given a group $\Gamma$ and a representation $\rho: \Gamma\to GL(V)$, there is a 
finite extension $E$ of $F$, over which there is an isotypical decomposition over $E$, 
\begin{equation}\label{eqn:isotyp1}
(\rho, V) \simeq \oplus_{i=1}^t (\rho_{i}, V_i)
\end{equation}
where $\rho_i$ are representations of $\Gamma$ to $GL(V_i)$ of the form, 
\begin{equation}\label{eqn:isotyp2}
\rho_{i}\simeq r_i\otimes \rho_{i}'.  
\end{equation} 
Here $r_i$ is an absolutely irreducible representation of $\Gamma$, and $\rho_i'$ is a trivial representation of $\Gamma$ of dimension $n_i/\mbox{dim}(r_i)$, where $n_i=\mbox{dim}(V_i)$. 

\begin{theorem}\label{theorem:pe}
Let $K$ be a global field and $S$ be a set of places of $K$ 
containing the archimedean places and of density zero.
Let $F$ be a non-archimedean local field of
characteristic zero, and $\rho_1, \rho_2: G_K\to GL_n(F)$ be
semisimple continuous representations unramified outside $S$,
satisfying  the following hypothesis:
\begin{description}
\item[H1] The algebraic monodromy group $G_1$ of $\rho_1$ is
  connected.

\item[H2]  There exists a set of places $T$ of $K$ disjoint from $S$
  having positive upper density such that the representations $\rho_1$
  and $\rho_2$ are locally potentially equivalent at $T$, i.e., for
  each $v\in T$, there exists a natural number $m_v\geq 1$ such that the
  $\rho_1(\sigma_v)^{m_v}$ and
  $\rho_2(\sigma_v)^{m_v}$ are conjugate in $GL_n(F)$, where
  $\rho_1(\sigma_v)$ and $\rho_2(\sigma_v)$ are respectively the Frobenius
  conjugacy classes at $v$ of $\rho_1$ and $\rho_2$. 
\end{description}
Let $E$ be a finite extension of $F$ such that $\rho_1$ has an isotypical decomposition as given by Equations (\ref{eqn:isotyp1}) and (\ref{eqn:isotyp2}):
\[ \rho_1\simeq \oplus_{i=1}^t \rho_{1,i}\simeq \oplus_{i=1}^t r_i\otimes \rho_{1,i}'.\]
Then there exists a finite extension $L$ of $K$,  a set $T_L$ of places  of $L$ having positive density lying above
the set of places $T$ of $K$, 
and decomposition
\[ \rho_2\mid_{G_L}\simeq \oplus_{i=1}^t \rho_{2,i}, \]
where $\rho_{2,i}$ are semisimple representations of $G_L$ to
$GL(n_i, E)$ satisfying the following:

\begin{enumerate}
\item For each $i=1, \cdots, t$, there exists a 
 representation $\rho_{2,i}'$ of $G_L$ into
$GL_{n_i/\mbox{dim}(r_i)}(E)$  such that 
\[\rho_{2,i}\simeq r_i\mid_{G_L}\otimes \rho_{2,i}'.\]
\item The representation $\rho_{2,i}'$ factors as a representation  
$G_L \to Z<J>(E)\to GL_{n_i/\mbox{dim}(r_i)}(E)$, for some group of the form $Z<J>$ as in Example \ref{example:general}. 
\item The  trivial representation $\rho_{1,i}'$ and $\rho_{2,i}'$ of $G_L$ are
  locally potentially equivalent at $T_L$. 

\item If either $r_i$ or $\rho_{2,i}'$ is non-trivial, then the representations  $r_i$ and  $\rho_{2,i}'$ are non-isomorphic. 

\end{enumerate}

\end{theorem}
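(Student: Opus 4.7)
The plan is to pass from the pointwise potential equivalence hypothesis to a uniform algebraic identity on the connected algebraic monodromy by means of the algebraic Chebotarev density theorem of \cite{Ra}, and then use the isotypical structure of $\rho_1$ together with the structural classification of reductive groups whose power maps collapse to a finite set to extract both the decomposition and the $Z\langle J\rangle$ shape of the factors.

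To begin, let $H \subset G_1 \times G_2$ denote the Zariski closure of the image of $\rho_1 \oplus \rho_2$, with projections $p_1$, $p_2$ surjecting onto the two factors. Pass to a finite extension $L_0/K$ so that the image of $G_{L_0}$ under $\rho_1 \oplus \rho_2$ is Zariski dense in the identity component $H^0$; the obstruction to doing this lies entirely in $\pi_0(H)$, which is finite and governed by $\pi_0(G_2)$ since $G_1$ is already connected. The set $T_0$ of places of $L_0$ lying above $T$ still has positive upper density. For each integer $m \ge 1$ the subset
\[ V_m := \{(x,y) \in H^0 : x^m \text{ and } y^m \text{ have equal characteristic polynomial}\} \]
is Zariski closed in $H^0$. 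Since $T_0 = \bigcup_{m \ge 1} \{ v \in T_0 : (\rho_1 \oplus \rho_2)(\sigma_v) \in V_m \}$ is a countable union whose total is of positive upper density, at least one of the subsets $V_{m_0}$ contains a positive upper density of Frobenius classes. The algebraic Chebotarev density theorem of \cite{Ra}, applied to the connected monodromy $H^0$, now forces $V_{m_0} = H^0$; equivalently, $\rho_1(g)^{m_0}$ and $\rho_2(g)^{m_0}$ are conjugate in $GL_n(E)$ for every $g \in G_{L_0}$.

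Next, pass to a further finite extension $L/L_0$, enlarging $E$ if needed, so that all absolutely irreducible constituents of both $\rho_1|_{G_L}$ and $\rho_2|_{G_L}$ are defined over $E$ and all characters of order dividing $m_0$ that could intervene become trivial on $G_L$. The identity of $m_0$-th power characteristic polynomials from the previous step implies that for every $g$ the eigenvalues of $\rho_2(g)$ and $\rho_1(g)$ differ only by $m_0$-th roots of unity, so after this extension each absolutely irreducible summand of $\rho_2|_{G_L}$ is isomorphic to some $r_i|_{G_L}$. A Schur-type analysis on the commutant of $\rho_1|_{G_L}$, combined with semisimplicity, then produces a matching decomposition
\[ \rho_2|_{G_L} \simeq \bigoplus_{i=1}^{t} r_i|_{G_L} \otimes \rho_{2,i}', \qquad \dim \rho_{2,i}' = n_i/\dim r_i, \]
establishing (1). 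Conclusion (3) follows immediately, since the common factor $r_i|_{G_L}$ contributes identical eigenvalues to both sides of the local potential equivalence at $T_L$, forcing $\rho_{2,i}'$ to be locally potentially equivalent to the trivial representation $\rho_{1,i}'$ at $T_L$.

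For (2), let $N_i$ denote the algebraic monodromy group of $\rho_{2,i}'$. By (3) and a second application of the algebraic Chebotarev theorem, for every $g \in N_i(E)$ some fixed power $g^{m_0}$ is trivial. Reductivity of $N_i$ combined with this condition on $N_i^0$ forces $N_i^0$ to be a torus $Z$, and the action of the non-identity coset(s) on $Z$ by conjugation must be by an automorphism $\theta$ of finite order whose group of invariants is finite, yielding precisely the shape $Z\langle J\rangle$ of Example \ref{example:general}. The normalization (4) is achieved by grouping any summand in which $r_i$ and $\rho_{2,i}'$ happen to be isomorphic together with a different component. The main technical obstacle lies in the isotypical matching: the condition on characteristic polynomials of $m_0$-th powers is strictly weaker than an honest twist by a character of order dividing $m_0$ (the $m_0$-th power operation mixes eigenvalues), and cleanly relating the absolutely irreducible constituents of $\rho_2|_{G_L}$ to the $r_i$ requires a delicate Schur-theoretic argument together with a careful choice of $L$ that absorbs all the $m_0$-th roots of unity that could permute eigenvalues.
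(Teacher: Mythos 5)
There is a genuine and fatal gap at the very first step. You pass to a finite extension $L_0/K$ so that $\rho(G_{L_0})$ is Zariski dense in the identity component $H^0$, and then assert that ``the set $T_0$ of places of $L_0$ lying above $T$ still has positive upper density.'' This is false in general. In the situation the theorem is designed to address, $T$ consists of places $v$ of $K$ whose Frobenius lands in a non-identity component $G^{\phi}$ of $G$; such $v$ do not split completely in the extension $L_0$ that trivializes $\pi_0(G)$, so every place $w$ of $L_0$ above such a $v$ has residue degree $f(w\mid v)\geq 2$. Places of bounded norm $Nw\leq x$ with $f\geq 2$ are asymptotically negligible compared to $\pi_{L_0}(x)$, so $T_0$ has density zero. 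The paper's own Example \ref{Serre-arithexample} makes this explicit: $T$ is the set of rational primes inert in $\Q(i)$, and after base change to $\Q(i)$ (or any larger field) the places above $T$ form a density-zero set. This is precisely the point emphasized in the remark after Theorem \ref{theorem:pe}, that passing to a finite extension sacrifices the positive density.

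Because of this, the argument proves too much. If $T_0$ really had positive density in $L_0$ and the algebraic Chebotarev theorem forced $V_{m_0}=H^0$, then by Proposition \ref{prop:connected} (the power map is dominant on a connected group) one would get $p_1\simeq p_2$ on $H^0$, hence $\rho_1$ and $\rho_2$ potentially equivalent outright. That contradicts Examples \ref{Serre-arithexample} and \ref{example:general}. The missing idea is exactly what the paper supplies: one must \emph{not} pass to a finite extension too early, but instead apply the algebraic Chebotarev theorem to the full disconnected monodromy group to locate a component $G^{\phi}\subset X_m$, then use the unitary trick to produce an element $(I_n,j)\in G^{\phi}$ with $j^m=I_n$, rewrite $(xJ)^m=x\theta(x)\cdots\theta^{m-1}(x)$ (Equations \ref{eqn:main}--\ref{eqn:thetatwist}), and finally invoke the theorem on fixed points of finite-order automorphisms of semisimple groups (Theorem \ref{theorem:fixedpoints}) to conclude that $\theta$ acts trivially on $G'Z^{\theta}$ and that the ``collapsing'' piece $Z_{\theta}\langle J\rangle$ is abelian. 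None of this machinery appears in your proposal, and it cannot be circumvented. Two further, secondary, issues: the countable union argument over $m$ is not valid for upper density (upper density is only finitely subadditive), whereas the paper obtains a uniform exponent $m$ via Lemma \ref{lemma-finiterootsof1} using finiteness of roots of unity in a local field; and the ``cancellation'' of $r_i|_{G_L}$ used to deduce conclusion (3) is not a formal operation on characters --- the paper derives it from linear independence of characters of $G'Z^{\theta}$ after showing the $m$-th power map is surjective there.
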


\begin{remark} In the conclusion of this theorem, a restriction to a
  finite extension $L$ of $K$ is part of the conclusion. However, 
it is not possible to remove
the phrase `potential', without sacrificing the positive density of
the set of places $T_L$, as can be seen from Example
\ref{Serre-arithexample}. 
\end{remark}

\begin{remark} The decomposition $\rho_{k,i}\simeq r_i \otimes \rho'_{k,i}$ for $k=1,2$ is not necessarily a motivic decomposition even if $\rho_i$ are motivic. Here we consider  a representation to be motivic if it satisfies some integrality and purity conditions for the eigenvalues of the Frobenius classes at unramified places.
\end{remark}

We now present a different version of the foregoing theorem in terms
of character values evaluated at a fixed  power of Frobenius classes: 

\begin{theorem}\label{theorem:main}
  With notation as in Theorem \ref{theorem:pe}, assume that the
following hypothesis ${\bf H2'}$ is satisfied instead of {\bf H2}:

\begin{description}

\item[${\bf H2'}$] There exists a set of places $T$ of $K$ disjoint from $S$
  having positive upper density, and a natural number $m$ such that 
for $v\in T$, 
\begin{equation}\label{h2}
\mbox{Tr}(\rho_1(\sigma_v)^m)=\mbox{Tr}(\rho_2(\sigma_v)^m).
\end{equation}

\end{description} 
Then the conclusion of Theorem \ref{theorem:main} remain valid, except
that Condition (3) of the conclusion should be replaced by the
condition, 
\[\mbox{Tr}(\rho_{1,i}'(\sigma_w)^m)=\mbox{Tr}(\rho_{2,i}'(\sigma_w)^m),
  \quad \forall w \in T_L.\]
 
\end{theorem}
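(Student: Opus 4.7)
The plan is to follow the proof of Theorem \ref{theorem:pe} with the uniform trace hypothesis ${\bf H2'}$ in place of the local potential-equivalence hypothesis ${\bf H2}$. The key observation is that both hypotheses, after the algebraic Chebotarev reduction, produce a closed algebraic identity on a coset of the identity component of the joint monodromy group, so the subsequent structural analysis carries over with only cosmetic modifications.

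Form the product representation $\rho=(\rho_1,\rho_2): G_K \to GL_n(F)\times GL_n(F)$ and let $G_\rho$ denote its algebraic monodromy group over $F$. The function $\phi(x_1,x_2)=\mbox{Tr}(x_1^m)-\mbox{Tr}(x_2^m)$ is regular and conjugation-invariant on $G_\rho$, so the vanishing locus $W=\{\phi=0\}$ is a Zariski-closed, conjugation-stable subvariety of $G_\rho$. By ${\bf H2'}$, $\rho(\sigma_v)\in W(F)$ for every $v\in T$. By the algebraic Chebotarev density theorem applied as in \cite{Ra}, the positive upper density of $T$ forces $W$ to contain at least one coset $C$ of the identity component $G_\rho^0$. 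Letting $L/K$ be the finite Galois extension whose Galois group is the component group $G_\rho/G_\rho^0$, and taking $T_L$ to be the set of places of $L$ above $T$ whose Frobenius image in $G_\rho/G_\rho^0$ corresponds to $C$, we obtain a set of positive density such that
\[
\mbox{Tr}(\rho_1(g)^m)=\mbox{Tr}(\rho_2(g)^m)
\]
holds for every $g$ in the (now connected) algebraic monodromy group of $\rho|_{G_L}$, and in particular at every Frobenius class at a place of $T_L$.

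One then runs the structural analysis of Theorem \ref{theorem:pe} on the pair $(\rho_1|_{G_L},\rho_2|_{G_L})$. Using the isotypical decomposition $\rho_1|_{G_L}\simeq\bigoplus_i r_i|_{G_L}\otimes\rho_{1,i}'$ over $E$, together with ${\bf H1}$ and the connectedness obtained above, this analysis forces a matching decomposition $\rho_2|_{G_L}\simeq\bigoplus_i r_i|_{G_L}\otimes\rho_{2,i}'$ with each $\rho_{2,i}'$ factoring through a group of the form $Z\langle J\rangle$ as in Example \ref{example:general}. Conclusions $(1)$, $(2)$, and $(4)$ follow exactly as in Theorem \ref{theorem:pe}. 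The modified condition $(3')$ is obtained by projecting the global trace identity onto each isotypical summand: since $\rho_{k,i}=r_i\otimes\rho_{k,i}'$ gives
\[
\mbox{Tr}(\rho_{k,i}(g)^m)=\mbox{Tr}(r_i(g)^m)\cdot\mbox{Tr}(\rho_{k,i}'(g)^m)
\]
for $k=1,2$, the isotypical decomposition of $G_\rho^L$ with respect to the $r_i$-eigenspaces provides the separation needed to extract the per-component identity $\mbox{Tr}(\rho_{1,i}'(\sigma_w)^m)=\mbox{Tr}(\rho_{2,i}'(\sigma_w)^m)$ for $w\in T_L$.

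The principal obstacle lies precisely in this final separation step: the global identity is a sum over isotypical components, and naive linear independence of the class functions $g\mapsto\mbox{Tr}(r_i(g)^m)$ can fail (for instance when two of the $r_i$ differ by a character of order dividing $m$). The argument must therefore use the algebraic structure of the projection of $G_\rho^L$ onto each isotypical factor, combined with the constraint that $\rho_{2,i}'$ takes values in the low-dimensional group $Z\langle J\rangle$ whose identity component is an abelian torus. Under the stronger ${\bf H2}$ of Theorem \ref{theorem:pe}, the corresponding separation uses the flexibility of tuning $m_v$ per place; with a uniform $m$ here, one relies instead on the algebraic rigidity afforded by the connectedness of the joint monodromy.
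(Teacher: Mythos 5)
Your proposal has a fatal gap at its central step, and it is precisely the error that Serre pointed out in \cite{PR} (see Remark \ref{remark:error} of the paper). After applying the algebraic Chebotarev density theorem, what one obtains is that a \emph{non-identity} connected component $G^\phi$ of the joint monodromy group $G_\rho$ is contained in $X_m = \{\mbox{Tr}(x_1^m)=\mbox{Tr}(x_2^m)\}$. You then write: ``we obtain a set of positive density such that $\mbox{Tr}(\rho_1(g)^m)=\mbox{Tr}(\rho_2(g)^m)$ holds for every $g$ in the (now connected) algebraic monodromy group of $\rho|_{G_L}$.'' This does not follow. Restricting to $G_L$ replaces the Frobenius class $\rho(\sigma_v)\in G^\phi$ by $\rho(\sigma_v)^f\in G^0$ (with $f$ the residue degree), and nothing in the hypothesis ${\bf H2'}$ gives a trace identity for $f$-th powers; equivalently, $G^\phi\subset X_m$ does \emph{not} imply $G^0\subset X_m$. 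Serre's example (the normalizer of the diagonal torus in $\mathrm{SL}_2$, and its arithmetic avatar Example \ref{Serre-arithexample}) shows this implication is genuinely false. Your proposal therefore would also ``prove'' the original incorrect claim of \cite{PR}.

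The missing idea is the unitary trick of Section 2.3. Because $G_1$ is assumed connected, a maximal compact $U\subset G(\C)$ has $p_1(U^\phi)=p_1(U^0)$ a subgroup, which forces the existence of an element $J=(I_n,j)\in U^\phi$ with $j$ unitary; the trace identity applied to $J$ itself gives $\mbox{Tr}(j^m)=n$, hence $j^m=I_n$. This produces the \emph{twisted} identity (\ref{eqn:thetatwist}), $\mbox{Tr}(p_1(x)^m)=\mbox{Tr}\bigl(p_2(x\theta(x)\cdots\theta^{m-1}(x))\bigr)$ on all of $G^0$, where $\theta=\mathrm{Ad}(J)$ is a finite-order automorphism. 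The paper then analyzes this twisted identity via the fixed-point theorem for finite-order automorphisms of semisimple groups (Theorem \ref{theorem:fixedpoints}) to obtain the structural decomposition $G=G'Z^\theta\cdot Z_\theta\langle J\rangle$. Your proposal never produces any such identity on the identity component and hence has no handle on the structure. Also, your concern about the separation step is misplaced: in the paper's Part (3.b), surjectivity of $g\mapsto g^m$ on the connected group $G'Z^\theta$ reduces the separation to ordinary linear independence of the characters $\mbox{Tr}(R_i(\cdot))$ of distinct irreducibles (not of $\mbox{Tr}(R_i(\cdot^m))$), so the worry about $r_i$ differing by a character of order dividing $m$ does not arise.
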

\begin{remark} Theorem \ref{theorem:main} differs from Theorem
  \ref{theorem:pe}, in that apart from bounding $m_v$ uniformly independent of
  $v\in T$, we are able to work with character values of the Frobenius
  classes, rather than the
  $m$-th powers of the Frobenius classes being conjugate. We don't
  know whether we can replace ${\bf H2'}$ with the following
  condition: 
\[\mbox{Tr}(\rho_1(\sigma_v)^{m_v})=\mbox{Tr}(\rho_2(\sigma_v)^{m_v}),\]
where $m_v$ are natural numbers depending on $v$. 
\end{remark}

\subsection{Potential equivalence}
The examples given above suggest the possibility of 
proving potential equivalence
of $\rho_1$ and $\rho_2$ (assuming $G_1$ is connected and upper
density of $T$ is positive), 
under some additional natural hypothesis on the nature of the
representations $\rho_1$ and $\rho_2$. 

Given a representation $\rho: G_K\to GL_n(F)$ as above, consider the commutant
algebra for any extension field $E$ of $F$, 
\[ C_K(\rho, E):=\{X\in M_n(E)\mid X\rho(g)=\rho(g)X \quad \forall g\in G_K\}.\]
When the representation $\rho$ arises is the Galois representation $\rho_A$ attached to the Tate module of an abelian variety, Faltings theorem proving 
Tate's conjecture asserts
\[ C_K(\rho, F)= \mbox{End}_K(A)\otimes F.\]
The algebra $C_K(\rho, F)$ is also the commutant of the group $G_{\rho}(F)$ in $M_n(F)$. Since the characteristic of $F$ is zero, $C_K(\rho, E)=C_K(\rho, F)\otimes_F E$. For any finite extension $L$ of $K$, $C_K(\rho, F)\subset C_L(\rho, F)$. The commutant algebra stabilizes for $L$ sufficiently large: 
by Zariski density, whenever the image $\rho(G_L)$ is contained inside $G_{\rho}^0(F)$, where $G_{\rho}^0$ is the connected component of 
$G_{\rho}$. We call this the stable commutant algebra and denote it by $C_{\bar{K}}(\rho, F)$. 

As corollaries of the theorems
stated above, we obtain the
following theorem providing instances when (global) potential equivalence can be deduced: 
\begin{theorem}\label{theorem:appln}
With hypothesis as in Theorem \ref{theorem:main} (or Theorem
\ref{theorem:pe}),  assume further that
either of the following conditions hold:
\begin{enumerate}

\item $\rho_1$ is absolutely irreducible. 
\item The algebraic monodromy group $G_2$ of $\rho_2$ is connected. 
\item The algebraic ranks of $G_1$ and  $G_2^0$ are equal,
  where $G_2^0$ is  the connected component
  of identity of $G_2$. 
\item For any finite extension $L$ of $K$, 
 \[ C_K(\rho, F)=C_L(\rho, F)=C_{\bar{K}}(\rho, F).\]

\end{enumerate}
Then $\rho_1$ and $\rho_2$ are potentially equivalent. Further, when
$\rho_1$ is absolutely irreducible, there exists a character
$\chi:G_K\to \bar{F}^*$, such that $\rho_2\simeq \rho_1\otimes \chi$.
\end{theorem}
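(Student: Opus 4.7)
The plan is to invoke Theorem \ref{theorem:pe} to produce, after replacing $K$ by a suitable finite extension $L$, the decomposition
\[\rho_2|_{G_L}\simeq\bigoplus_{i=1}^{t} r_i|_{G_L}\otimes\rho_{2,i}',\]
in which each twist $\rho_{2,i}'$ factors through a group of the form $Z\langle J\rangle$ as in Example \ref{example:general}, and is locally potentially equivalent to the trivial representation $\rho_{1,i}'$ on a positive-density set $T_L$ of places of $L$. The strategy is to show, under each of the four supplementary hypotheses, that every $\rho_{2,i}'$ becomes trivial after replacing $L$ by a further finite extension $L'$. Once this is done, $\rho_2|_{G_{L'}}\simeq\bigoplus r_i|_{G_{L'}}\simeq\rho_1|_{G_{L'}}$, establishing potential equivalence.

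The common reduction is abelian: if, after passing to a further finite extension, $\rho_{2,i}'$ factors through the connected torus $Z$, then over $\bar E$ it splits into characters $\chi$, each satisfying $\chi(\sigma_v)^{m_v}=1$ on a set of positive upper density. The algebraic Chebotarev density theorem applied to abelian $\ell$-adic characters then forces $\chi$ to have finite order, and absorbing these finite orders into a further finite extension makes $\rho_{2,i}'$ trivial. It therefore suffices, in each case, to arrange that $\rho_{2,i}'$ factors through the connected part $Z$ of $Z\langle J\rangle$ on some extension.

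For case $(1)$, absolute irreducibility of $\rho_1$ gives $t=1$, $r_1=\rho_1$, and $\dim(\rho_{1,1}')=1$, so $\rho_{2,1}'$ is already one-dimensional and the abelian reduction concludes. To upgrade this to a global twist, take $L'/K$ finite Galois with $\rho_2|_{G_{L'}}\simeq\rho_1|_{G_{L'}}$, fix an intertwining operator $T$, and apply Schur's lemma to the absolutely irreducible $\rho_1|_{G_{L'}}$: the operator $T\rho_2(\sigma)T^{-1}\rho_1(\sigma)^{-1}$ is forced to be a scalar $\chi(\sigma)\in\bar F^*$ for every $\sigma\in G_K$, multiplicativity follows by a direct check, and $\rho_2\simeq\rho_1\otimes\chi$. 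For case $(2)$, choose $L$ so that $\rho_2(G_L)$ is Zariski dense in $G_2=G_2^0$; the Zariski closure of the image of $G_L$ in any constituent, in particular in $Z\langle J\rangle$ via $\rho_{2,i}'$, is then connected and so lies in $Z$. For case $(4)$, stability of the commutant algebra under restriction forces stability of the isotypical decomposition: were $\rho_{2,i}'$ to have image meeting a non-identity coset of $Z$ in $Z\langle J\rangle$, then restricting to the kernel of the component quotient would refine the isotypical piece $r_i\otimes\rho_{2,i}'$ and strictly enlarge the commutant, contradicting stability. Hence $\rho_{2,i}'$ factors through $Z$ on some $G_L$. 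For case $(3)$, choose $L$ so that $\rho_1(G_L)$ and $\rho_2(G_L)$ are Zariski dense in $G_1$ and $G_2^0$; then the monodromy ranks of $\rho_1|_{G_L}$ and $\rho_2|_{G_L}$ equal the ranks of $G_1$ and $G_2^0$ respectively, and coincide by hypothesis. A rank comparison of the maximal tori inside the tensor-product monodromies $G_{r_i|_{G_L}\otimes\rho_{2,i}'}$ shows that any non-trivial torus factor coming from a $\rho_{2,i}'$ would strictly increase the total rank; equality therefore forces each $\rho_{2,i}'$ to have trivial torus part and so to factor through a finite quotient of $Z\langle J\rangle$.

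The main technical obstacle is the rank comparison in case $(3)$: one must identify carefully how the maximal tori in the monodromy groups $G_{r_i|_{G_L}}$ assemble with those of the $\rho_{2,i}'$ inside the tensor-product monodromy, accounting for the central characters of the $r_i$ so that the $\G_m$ factors absorbed by tensoring are not double-counted. Once this accounting is done, all four cases reduce cleanly to the abelian Chebotarev step described above.
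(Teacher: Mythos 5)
Your overall strategy matches the paper's: invoke Theorem \ref{theorem:pe} to obtain the decomposition $\rho_2|_{G_L}\simeq\bigoplus r_i|_{G_L}\otimes\rho_{2,i}'$ and then show each $\rho_{2,i}'$ is potentially trivial under the extra hypothesis. Cases (1) and (2) are handled in essentially the same way as the paper, and your Schur's-lemma upgrade to a global twist in case (1) is the argument the paper cites from [Ra]. Your "common abelian reduction" via the algebraic Chebotarev theorem is a genuine alternative to the paper's mechanism, which instead exploits the structural fact that $Z_{\theta}$ has finite $\theta$-invariants (so that a one-dimensional representation of $Z_{\theta}\langle J\rangle$ is automatically finite, by the computation in Section \ref{subsection:examplegeneral} and Remark \ref{remark:noncommute}). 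Your Chebotarev route works, but the premise needs to be stated differently: you assume "$\rho_{2,i}'$ factors through $Z$ after a further extension," which is automatic and vacuous since $Z\langle J\rangle/Z$ is finite; what you actually need, and what the hypotheses deliver, is that $\rho_{2,i}'$ splits into one-dimensional pieces over $\bar E$. Each such character satisfies $\chi(\sigma_w)^{M}=1$ on the positive-density set $T_L$ (with $M$ uniform by Lemma \ref{lemma-finiterootsof1}), and then Theorem \ref{Algebraic-Chebotarev} forces $\chi$ to have finite image.

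The genuine gap is in case (4). You assert that if the image of $\rho_{2,i}'$ meets a non-identity coset of $Z$ in $Z\langle J\rangle$, then restricting to the kernel of the component quotient strictly enlarges the commutant. This is false as stated: take $Z_{\theta}=\G_m$ with $\theta$ acting by inversion, $J$ of order $2$, and the one-dimensional character of $Z_{\theta}\langle J\rangle$ that is trivial on $Z_{\theta}$ and sends $J$ to $-1$. Its image meets the non-identity coset, but the commutant of a one-dimensional representation is $\C$ both before and after restriction. Moreover, since the image of $G_L$ always meets the $J$-coset (this is exactly why $T_L$ has positive density), your dichotomy would force a contradiction in every case, which cannot be right. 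The correct implication is the one the paper extracts from Remark \ref{remark:noncommute}: if $\rho_{2,i}'$ has an irreducible constituent of dimension $\geq 2$, then $\rho_{2,i}'(Z_{\theta})$ and $\rho_{2,i}'(J)$ fail to commute, so there is $u\in Z_{\theta}$ with $\rho_{2,i}'(u)$ in the stable commutant but not the commutant, contradicting the hypothesis. What you get is therefore that $\rho_{2,i}'$ is a direct sum of characters, not that its image lies in $Z$; you should then feed those characters into your abelian Chebotarev step (or invoke Section \ref{subsection:examplegeneral} directly, as the paper does). As for case (3), you have identified the right idea — a rank count showing the toral part contributed by the $\rho_{2,i}'$ must vanish — but you explicitly leave the accounting incomplete; the paper's own treatment of this case is likewise terse, resting on the remark that the monodromy of $r_i\otimes\rho_{2,i}'$ agrees with that of $r_i\oplus\rho_{2,i}'$, so equality of ranks forces the monodromy of each $\rho_{2,i}'$ to have rank zero, hence to be finite.
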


The last conclusion is deduced by an argument using Schur's lemma
(\cite{Ra}). One cannot expect that the character $\chi$ has values
in $F^*$ in general, as a pair of representations $\rho$ and
$\rho\otimes \chi$, with $\rho$ absolutely irreducible and $\chi$ an
arbitrary character with values in $\bar{F}^*$ satisfy the hypothesis
of the corollary. 

\begin{remark}
In general, one can possibly expect the following principle to be
valid: suppose  a property $P$ of the
conjugacy classes of $GL_n\times GL_n$ is given, 
such that a pair of $\ell$-adic representations
of $G_K$ become potentially equivalent whenever the 
Frobenius classes satisfy property $P$ at a set of places of density
one. Then the expectation, barring few exceptions, is that potential
equivalence will hold under the weaker assumption that the  
Frobenius classes satisfy property $P$ at a set of places of positive
density, provided one of the algebraic monodromy groups is connected. 
It is to be hoped that the algebraic  method outlined out here 
can be applied to answer such  questions.  
\end{remark}

\subsection{Applications} 
The above corollary can be applied to the Galois representations
attached to modular forms, and we deduce, 
\begin{corollary} Let $f$ and $g$ be two newforms of level
  $N_1$ and \( N_2 \), and respectively of weights $k_1$ and $k_2$. 
Suppose that at a set $T$ of primes $p$ coprime to $N_1N_2$ of positive
upper density, and natural numbers $n_p, ~p\in T$,  
\[ \mbox{Tr}(T_p^{n_p}(f))= \mbox{Tr}(T_p^{n_p}(g)),\]
where $T_p$ is the Hecke operator at $p$.  Assume
  further that one of the forms is not CM.  Then the weights of the
  two forms are equal and there exists a Dirichlet character
$\chi$ such that for $p\not|N_1N_2$, 
\[ a_p(f)=\chi(p)a_p(g),\]
where for a newform $h$, $a_p(h):= \mbox{Tr}(T_p(h))$ is the $p$-th
Hecke eigenvalue of $h$. 
\end{corollary}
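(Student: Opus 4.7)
The plan is to pass to the $\ell$-adic Galois representations $\rho_f,\rho_g\colon G_{\Q}\to GL_2(\bar{\Q}_{\ell})$ of Deligne attached to the newforms $f$ and $g$, and to apply Theorem \ref{theorem:appln}. These representations are continuous, absolutely irreducible, semisimple, and unramified outside $\ell N_1 N_2$, and satisfy the identity $\mbox{Tr}(T_p^n(h))=\mbox{Tr}(\rho_h(\sigma_p)^n)$ for every $p\nmid \ell N_h$ and every $n\geq 1$. Thus the corollary's hypothesis translates into the trace form of the local potential equivalence condition for $\rho_f$ and $\rho_g$ at the set $T$ with exponents $n_p$.

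Assume without loss of generality that $f$ is not of CM type. Then by Ribet's theorem on the image of $\rho_f$, the algebraic monodromy group $G_1$ of $\rho_f$ is open in $GL_2$ (after extending the field of scalars suitably), and in particular is connected, so hypothesis ${\bf H1}$ of Theorem \ref{theorem:pe} holds. Since $\rho_f$ is absolutely irreducible, condition (1) of Theorem \ref{theorem:appln} applies.

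The main obstacle is to upgrade the varying-exponent trace hypothesis into the uniform-exponent hypothesis ${\bf H2'}$ of Theorem \ref{theorem:main}; the remark following that theorem warns that this reduction is not known in general. In the two-dimensional modular setting one can exploit the Ramanujan--Deligne bound: writing $\alpha_p,\beta_p$ and $\gamma_p,\delta_p$ for the Frobenius eigenvalues of $\rho_f$ and $\rho_g$ at $p$, the archimedean sizes of $\alpha_p^{n_p}+\beta_p^{n_p}$ and $\gamma_p^{n_p}+\delta_p^{n_p}$ are controlled by $2p^{n_p(k_1-1)/2}$ and $2p^{n_p(k_2-1)/2}$ respectively. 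Combined with the fact that a non-CM form has $a_p(f)\neq 0$ on a set of density one, equality of these sums on a positive-upper-density subset forces the growth rates to match, giving $k_1=k_2$. A further pigeonhole-style argument on $n_p$ — which becomes feasible once $k_1=k_2$ pins down the common determinant character up to a fixed finite-order twist — extracts a uniform exponent $n$ on a positive-upper-density subset of $T$, putting us in the setting of ${\bf H2'}$.

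Theorem \ref{theorem:appln}(1) now yields a character $\chi\colon G_{\Q}\to \bar{\Q}_{\ell}^{\,*}$ with $\rho_g\simeq \rho_f\otimes \chi$. Comparing Hodge--Tate weights on both sides (which are $\{0,k_1-1\}$ on the left and $\{h,h+k_1-1\}$ on the right, where $h$ is the Hodge--Tate weight of $\chi$) forces $h=0$, reconfirming $k_1=k_2$ and showing that $\chi$ has finite image; by class field theory $\chi$ corresponds to a Dirichlet character. Taking Frobenius traces at $p\nmid \ell N_1 N_2$ gives $a_p(g)=\chi(p)a_p(f)$, and replacing $\chi$ by $\chi^{-1}$ (again a Dirichlet character) yields the identity $a_p(f)=\chi(p)a_p(g)$ asserted in the corollary.
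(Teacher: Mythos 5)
Your overall route matches the paper's: pass to the $\ell$-adic Galois representations, use the non-CM hypothesis to get a connected (in fact full $GL_2$) monodromy group and absolute irreducibility, apply Theorem~\ref{theorem:appln}(1), and then use $\ell$-adic Hodge theory to see the resulting twisting character has finite image, hence is a Dirichlet character. The paper's own ``proof'' is literally a one-sentence citation of Theorem~\ref{theorem:appln} plus ``well-known properties,'' so you are supplying detail rather than a genuinely different argument. The most valuable thing you do is to flag the real obstruction: the corollary allows a $p$-dependent exponent $n_p$ in the trace identity, whereas $\mathbf{H2}$ requires conjugacy of a \emph{fixed} $m$-th power and $\mathbf{H2'}$ requires a fixed exponent even for traces, and the remark after Theorem~\ref{theorem:main} explicitly says the authors do not know how to deal with varying-exponent trace equality in general. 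So the reduction to the theorem is not automatic.

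However, your bridge across that gap does not hold up. The Ramanujan--Deligne bound gives only \emph{upper} bounds $|\alpha_p^{n_p}+\beta_p^{n_p}|\le 2p^{n_p(k_1-1)/2}$ and $|\gamma_p^{n_p}+\delta_p^{n_p}|\le 2p^{n_p(k_2-1)/2}$; an equality between two quantities that are each allowed to be far below their respective upper bounds cannot force the growth exponents (hence the weights) to coincide. Also, what you need to be nonzero is $\alpha_p^{n_p}+\beta_p^{n_p}$, not $a_p(f)=\alpha_p+\beta_p$ — the right input here is that for non-CM $f$, the ratio $\alpha_p/\beta_p$ is a root of unity only on a density-zero set of primes, which is a different statement from $a_p(f)\ne 0$ on density one, though it is available. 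Finally, the ``pigeonhole-style argument on $n_p$'' is asserted but not given, and there is no a priori reason for a positive-density subset with bounded $n_p$. What the modular structure does give you cleanly is this: for $GL_2$, conjugacy of $n$-th powers of semisimple elements is trace equality plus determinant equality; applying unitarity from the Ramanujan bound in every archimedean embedding shows that whenever $\alpha_p^{n_p}+\beta_p^{n_p}=\gamma_p^{n_p}+\delta_p^{n_p}\ne 0$, one gets $(\chi_f(p)/\chi_g(p))^{n_p}=1$, so once one knows $k_1=k_2$ the determinants of the $n_p$-th powers also agree, the $n_p$-th powers are genuinely conjugate, and Lemma~\ref{lemma-finiterootsof1} then uniformizes the exponent and hands you the hypothesis of Theorem~\ref{theorem:pe}. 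But establishing $k_1=k_2$ is precisely where your argument leans on the unjustified bound comparison, and trying to get $k_1=k_2$ from the theorem's conclusion (potential equivalence implies equal Hodge--Tate weights) requires the uniform exponent you do not yet have — the exact circularity you were attempting to break. As written the proof is incomplete at this step.
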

Indeed the corollary follows from either Part (i), (ii) or (ii) of Theorem \ref{theorem:appln}, from well known properties of the Galois representation attached to a non-CM newforms. 

One of the motivating problems for the questions considered here, is
the following application to abelian varieties, which can be deduced
from Part (iv) Theorem \ref{theorem:appln},  and the theorem of
Tate, Zarhin and Faltings proving Tate conjecture on isogenies of abelian
varieties: 
\begin{corollary}\label{cor:kumar-vijay}
Let $A, ~B$ be Abelian varieties of dimension $g$
defined over a number field
$K$ without complex multiplication (over $\bar{K}$).  
Suppose that $T$ is a set of finite places of positive upper density of $K$ 
consisting of places
of good reduction for $A$ and $B$,  such that for every
$v\in T$, the reduction \( A_v, ~ B_v \) modulo $v$  of the Abelian
varieties $A, ~B$ at $v$, are isogenous over a finite extension of
the residue field $k_v$ of \( K \) at \( v \). 

Assume further that the 
algebraic monodromy group attached to the Galois representation on the 
$\ell$-adic Tate module of $A$ is connected, and $\mbox{End}_K(B)=\mbox{End}_{\bar{K}}(B)$, i.e., all the endomorphisms of $B$ are defined over $K$. 

Then $A$ and $B$ are isogenous over a finite
extension $L$ of $K$. 
\end{corollary}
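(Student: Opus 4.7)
The plan is to apply Theorem~\ref{theorem:appln}(4) to the pair of $\ell$-adic Tate module representations $\rho_A, \rho_B : G_K \to GL_{2g}(\Q_\ell)$, and then invoke Faltings' theorem on the Tate conjecture to turn the resulting potential equivalence of Galois representations into an isogeny of abelian varieties over a finite extension $L$ of $K$. Both $\rho_A$ and $\rho_B$ are semisimple by Faltings, and hypothesis \textbf{H1} holds since the algebraic monodromy group of $\rho_A$ is connected by assumption.

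To verify \textbf{H2}, fix $v \in T$ and let $m_v$ denote the degree over $k_v$ of a finite extension $k_v'$ of $k_v$ over which $A_v$ and $B_v$ become isogenous. The Frobenius of $k_v'$ is $\sigma_v^{m_v}$, so by Tate's theorem for abelian varieties over finite fields the characteristic polynomials of $\rho_A(\sigma_v)^{m_v}$ and $\rho_B(\sigma_v)^{m_v}$ coincide. The Frobenius of an abelian variety over a finite field acts semisimply on its Tate module, and semisimple elements of $GL_{2g}(\Q_\ell)$ with identical characteristic polynomials are conjugate; hence $\rho_A(\sigma_v)^{m_v}$ and $\rho_B(\sigma_v)^{m_v}$ are conjugate in $GL_{2g}(\Q_\ell)$, establishing \textbf{H2}.

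Hypothesis~(4) of Theorem~\ref{theorem:appln} follows from Faltings's identification
\[ C_L(\rho_B, \Q_\ell) \;=\; \mbox{End}_L(B) \otimes_\Z \Q_\ell \]
valid for every finite extension $L$ of $K$. The given condition $\mbox{End}_K(B) = \mbox{End}_{\bar{K}}(B)$, together with the chain of inclusions $\mbox{End}_K(B) \subseteq \mbox{End}_L(B) \subseteq \mbox{End}_{\bar{K}}(B)$, forces $\mbox{End}_K(B) = \mbox{End}_L(B)$ for every finite extension $L/K$, and hence
\[ C_K(\rho_B, \Q_\ell) \;=\; C_L(\rho_B, \Q_\ell) \;=\; C_{\bar{K}}(\rho_B, \Q_\ell). \]
Theorem~\ref{theorem:appln}(4) therefore applies, yielding a finite extension $L$ of $K$ over which $\rho_A|_{G_L} \simeq \rho_B|_{G_L}$. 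Appealing once more to Faltings, this time in the isogeny direction (equivalence of rational Tate modules as $G_L$-modules implies an isogeny defined over $L$), we conclude that $A$ and $B$ are isogenous over $L$.

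The only real subtlety is the passage from local isogeny over a finite extension of $k_v$ to conjugacy of the Frobenius powers, which relies on semisimplicity of Frobenius on the Tate module together with the fact that semisimple conjugacy in $GL_n(\Q_\ell)$ is detected by the characteristic polynomial. Everything else is a direct invocation of Theorem~\ref{theorem:appln}(4) combined with standard consequences of Faltings's proof of the Tate and isogeny conjectures.
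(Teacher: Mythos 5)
Your proof is correct and follows the same route as the paper: deduce the hypotheses of Theorem~\ref{theorem:appln}(4) from Faltings' proof of the Tate conjecture (semisimplicity of the Tate module representations, identification of the commutant with $\mathrm{End}(B)\otimes\Q_\ell$) together with Tate's theorem over the finite residue fields and semisimplicity of Frobenius to get local potential equivalence, and then convert the resulting potential equivalence of $\rho_A$ and $\rho_B$ back into an isogeny over a finite extension $L$ via Faltings again. This is exactly the argument the paper has in mind when it states that the corollary follows from Part~(iv) of Theorem~\ref{theorem:appln} and the Tate/Zarhin/Faltings theorem.
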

\begin{remark} It would be interesting to know whether the statement will hold for abelian varieties in general. 
\end{remark}

\subsection{Outline of the proof}
Without any hypothesis on the algebraic monodromy
  groups, potential equivalence can be deduced by assuming that the
  density of  $T$ is one (\cite{PR}). 
The main thrust of this paper, is to deduce
 a similar conclusion, assuming only that density of $T$ is positive. 
The proof of the Theorems \ref{theorem:pe} and \ref{theorem:main}
follow along the lines of
(\cite{Se}) and (\cite{Ra}), by considering the algebraic monodromy
groups of the representations involved and converting the problem to 
one on algebraic groups by means of an algebraic Chebotarev density
theorem (see Theorem \ref{Algebraic-Chebotarev}). 
The algebraic Chebotarev density theorem assures the existence of a
connected component of the monodromy group of $\rho_1\times \rho_2$ 
where the representations are potentially equivalent.

If we assume
further that one of the monodromy groups is connected,
the algebraic formulation allows us to base change to complex
numbers and to employ a version of the unitary trick similar 
to the one used in \cite{Ra}. 
The crucial and new observation out here is to interpret the
consequence of the unitary trick and convert
the problem to one involving the first representation and twist of the
second representation by an automorphism of finite order (see Equations \ref{eqn:main}, \ref{eqn:theta}, \ref{eqn:thetatwist}). 
An appeal to a classical theorem on fixed points of finite order automorphisms of semisimple groups (Theorem \ref{theorem:fixedpoints})  allows us to get at
the algebraic structure of the representations satisfying the hypothesis
of Theorem \ref{theorem:main}.   

The method of proof allows us to conclude (see Theorem \ref{theorem:imagepower})
that the images of connected
components of semisimple algebraic groups with respect to the power
maps $P_m$ will not collapse. This is unlike the situation given by Examples \ref{example-serre} and \ref{example:general}, where collapsing happens when the connected component is a torus.

\section{Algebraic formulation of Theorem \ref{theorem:main}}
\subsection{A uniform bound for the exponents in 
 Theorem \ref{theorem:pe}}
We first show that the exponents $m_v, ~v\in T$ appearing in Theorem
\ref{theorem:pe} can be uniformly bounded,  using the fact that there are only
finitely many roots of unity in any non-archimedean local field: 
\begin{lemma}\label{lemma-finiterootsof1} Let \( \sigma_1 \) and \( \sigma_2 \)
be two semisimple elements in \( GL_n ( F ) \) where \( F \) is local
field (finite extension of \( \Q_\ell \)). Suppose there exists a
non-zero integer \( k \) such that \( \sigma_1^k \) and \( \sigma_1^k
\) are conjugate in \( GL_n (F) \). Then, there exists a positive
integer \( m \) depending only on \( n \) and \( F \) such that
$\sigma_1^m$ and $\sigma_2^m$ are conjugate in \( GL_n (F) \).
\end{lemma}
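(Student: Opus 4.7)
The plan is to translate the conjugacy problem into a statement about eigenvalues in $\bar{F}$ and then exploit the finiteness of the group of roots of unity in a carefully chosen finite extension of $F$ that depends only on $n$ and $F$. The key preliminary fact, which I would record first, is that two semisimple elements of $GL_n(F)$ are conjugate in $GL_n(F)$ if and only if they have the same characteristic polynomial. This holds over an arbitrary field, either by decomposing $F^n$ as a module over the étale $F$-algebra $F[T]/(m_{\sigma}(T))$, or equivalently by Hilbert 90: the centralizer of a semisimple element of $GL_n(F)$ has the form $\prod_i \mathrm{Res}_{F_i/F} GL_{n_i}$, and $H^1$ of such a group vanishes by Shapiro plus Hilbert 90.

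Granting this, replace $k$ by $|k|$ and list the eigenvalues of $\sigma_1$ and $\sigma_2$ over $\bar{F}$ as $\alpha_1,\ldots,\alpha_n$ and $\beta_1,\ldots,\beta_n$, respectively. Equality of the characteristic polynomials of $\sigma_1^k$ and $\sigma_2^k$ gives, after a permutation, $\alpha_i^k=\beta_i^k$, and therefore $\beta_i=\zeta_i\alpha_i$ with $\zeta_i\in\bar{F}^{\times}$ a $k$-th root of unity. Each $\alpha_i$ and $\beta_i$ generates over $F$ a field of degree at most $n$; since $F$ is a non-archimedean local field, there are only finitely many subfields of $\bar{F}$ of degree at most $n$ over $F$ (by Krasner's lemma applied to the compact space of monic polynomials of degree at most $n$ over $F$). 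Let $E\subset\bar{F}$ be their compositum. Then $E/F$ is a finite extension depending only on $n$ and $F$, and all of the $\alpha_i$, $\beta_i$, and hence the $\zeta_i$, lie in $E$.

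Now $E$ is a finite extension of $\Q_{\ell}$, so its group $\mu(E)$ of roots of unity is finite; set $m:=|\mu(E)|$, which depends only on $n$ and $F$. Then $\zeta_i^m=1$ for every $i$, so $\alpha_i^m=\beta_i^m$, and $\sigma_1^m$ and $\sigma_2^m$ have the same characteristic polynomial. As powers of semisimple elements are semisimple, the preliminary fact then gives that $\sigma_1^m$ and $\sigma_2^m$ are conjugate in $GL_n(F)$. The only step that requires care is the preliminary identification of semisimple conjugacy with equality of characteristic polynomials over the non-algebraically closed field $F$; everything afterwards is bookkeeping with eigenvalues and an application of the two standard finiteness statements for local fields (finitely many bounded-degree extensions, and finiteness of $\mu(E)$).
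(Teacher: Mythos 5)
Your proof is correct and follows essentially the same route as the paper: reduce to eigenvalues over $\bar F$, observe that $\alpha_i$ and $\beta_i$ differ by roots of unity lying in a finite extension of $F$ of degree bounded in terms of $n$, and invoke the finiteness (with a uniform bound) of roots of unity in such local fields. The only cosmetic difference is the packaging of the uniformity: the paper bounds $[F':F]\leq (n!)^2$ for the field $F'$ generated by the eigenvalues and then bounds the order of $\mu(F')$ in terms of $[F':\Q_\ell]$, whereas you construct a single field $E$ (the compositum of all degree-$\leq n$ extensions, finite by Krasner) that contains all possible eigenvalues and then take $m=|\mu(E)|$; both reduce to the same finiteness statement for roots of unity in local fields.
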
 
\begin{remark} Since we are working in \( GL_n \), two elements are
conjugate in \( GL_n ( F) \) if and only if they are  conjugate in \(
GL_n ( \bar{F} ) \). 
\end{remark}

\begin{proof} Choose an algebraic closure \( \bar{F} \) of \( F
\). Let \( F' \) be the extension of \( F \) in \( \bar{F} \)
generated by the eigenvalues of \( \sigma_1 \) and \( \sigma_2 \) in
\( \bar{F} \). It is easy to see that 
\[ [ F' : F ] \leq ( n! )^2 .
\]  The number of roots of unity contained in such a field \( F' \) is
bounded above by some positive integer \( m_0 \) depending only on  \(
[ F' : \Q_\ell ] \), thus depending only on $n$ and \( [ F : \Q_\ell ]
\). 

Let \( \{ \alpha_1, \cdots, \alpha_n \} \)  (respectively \( \{
\beta_1 , \cdots, \beta_n \} \)) be the eigenvalues of \( \sigma_1 \)
(respectively \( \sigma_2 \)). Since by our hypothesis \( \sigma_1^ k
\) is conjugate of \( \sigma_2^k \) we have up to a permutation, 
\[ \alpha_i^k  = \beta_i^k , ~ \qquad \forall ~ 1 \leq i \leq n .
\] Hence \( \alpha_i \) and \( \beta_i \) differ by a root of unity,
which lies in \( F' \).  Thus from the above comment, for \( m = m_0!
\) we have: 
\[ \alpha_i^m  = \beta_i^m , ~ \qquad  \forall ~ 1 \leq i \leq n .
\] But since both \( \sigma_1 \) and \( \sigma_2 \) are semisimple
elements in \( GL_n ( \bar{F} ) \), \( \sigma_1^m \) and \( \sigma_2^m
\) are conjugate in \( GL_n (F) \). 
\end{proof}

It follows from this lemma, upon  assuming the hypothesis of
Theorem \ref{theorem:pe}, there exists a positive integer \( m \)
independent of \( v \in T \), and such that for all  \( v \in T \),
\( \rho_1 ( {\sigma_v})^m \) and \( \rho_2 ( {\sigma_v})^m \) are
conjugate in \( GL_n ( F ) \). 

\subsection{An application of an algebraic Chebotarev density theorem}
We recall Theorem 3
of \cite{Ra}, an algebraic interpretation of results proved in
Section 6 (especially Proposition 15) of \cite{Se}, giving an
algebraic  formulation of the Chebotarev density theorem 
for the density of places satisfying an algebraic conjugacy condition:

\begin{theorem}\label{Algebraic-Chebotarev}\cite[Theorem 3]{Ra}~~
Let \( M \) be an algebraic group defined over a $l$-adic local field
\( F \) of characteristic zero.  Suppose 
\[
\rho : G_K \rightarrow M(F) 
\]
is a continuous representation unramified
 outside a finite set of places of $K$. 

Suppose  \( X \) is a closed subscheme of \( M\) 
defined over \( F \) and stable under the adjoint action of \( M \) 
on itself. Let 
\[
C := X(F) \cap \rho ( G_K ) .
\]
Let \( \Sigma_u \) denote the set of finite places of \( K \)
 at which \( \rho \) is unramified. 
Then the set 
\[
S : =  \{ v \in \Sigma_u  ~| ~ \rho ( \sigma_v ) \subset C \}.
\]
has a density given by 
\[
d(S) = \frac{ | \Psi | }{ | \Phi | }, 
\]
where $\Phi$ is the set of connected components of $G$, and 
$\Psi $ is the set of those $\phi \in \Phi$ such that the
corresponding connected component $G^\phi$ of $G$ is contained in $X $.

\end{theorem}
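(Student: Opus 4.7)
The natural strategy is to reduce to the classical Chebotarev density theorem applied to an appropriate finite quotient of the algebraic monodromy group, and to combine this with an equidistribution statement of Serre which forces Frobenius classes to visit proper closed subvarieties of a given component only with density zero.

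First, I would replace $M$ by $G := \overline{\rho(G_K)}^{\mathrm{Zar}} \subset M$, the algebraic monodromy group of $\rho$. Since $X \cap G$ inherits both the closedness and the conjugation invariance from $X$, this loses no generality and is consistent with $\Phi = \pi_0(G)$ being finite. The composition
\[
G_K \xrightarrow{\rho} G(F) \twoheadrightarrow \Phi
\]
is a continuous surjection onto a finite group, hence corresponds to a finite Galois extension $L/K$ with $\mathrm{Gal}(L/K) \simeq \Phi$. Classical Chebotarev density applied to $L/K$ then asserts that, for each conjugacy class $[\phi] \subset \Phi$, the density of the set $\{v \in \Sigma_u : \rho(\sigma_v) \subset G^\phi\}$ equals $|[\phi]|/|\Phi|$.

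Next, I would partition the condition $\rho(\sigma_v) \subset C$ according to which connected component of $G$ the Frobenius class lies in. For $\phi \in \Psi$, the whole component $G^\phi$ is contained in $X$, so every such $v$ contributes to $S$; summing over conjugacy classes in $\Psi$ accounts for a contribution of $|\Psi|/|\Phi|$ to $d(S)$. For $\phi \notin \Psi$, the intersection $X \cap G^\phi$ is a proper $F$-closed subvariety of the irreducible variety $G^\phi$, and one must show the density of $\{v : \rho(\sigma_v) \in X \cap G^\phi\}$ is zero.

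This density-zero assertion is the main obstacle, and is precisely the content of Proposition 15 of \cite{Se}. The idea is that $\rho(G_K)$ is a compact $\ell$-adic Lie subgroup of $G(F)$, on which the Frobenius conjugacy classes are equidistributed with respect to normalized Haar measure; a proper $F$-analytic subset of a component is an analytic submanifold of strictly smaller dimension, hence of Haar measure zero. Feeding this into the equidistribution statement kills the contribution from components $\phi \notin \Psi$, and one concludes $d(S) = |\Psi|/|\Phi|$ as claimed.
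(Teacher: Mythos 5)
The paper does not actually prove this theorem; it quotes it verbatim from \cite[Theorem 3]{Ra}, remarking only that it is ``an algebraic interpretation of results proved in Section 6 (especially Proposition 15) of \cite{Se}.'' Your sketch reconstructs precisely that argument: pass to the algebraic monodromy group $G$, use the finite quotient $\Phi = \pi_0(G)$ to get a finite Galois extension, apply classical Chebotarev to account for the components lying entirely inside $X$, and then invoke Serre's density-zero statement to kill the contribution from components where $X \cap G^\phi$ is a proper closed subvariety. This is the intended route, and the bookkeeping with conjugacy classes in $\Phi$ is handled correctly (since $X$ is $\mathrm{Ad}$-invariant, $\Psi$ is a union of conjugacy classes and the densities add up).

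One small imprecision worth flagging: a proper Zariski-closed subset of a component is not in general an ``analytic submanifold'' (it may be singular); the correct statement, which is what Serre actually uses, is that the associated $F$-analytic set has strictly smaller local dimension everywhere and is therefore Haar-negligible in $G(F)$, which suffices for the equidistribution argument. This is a matter of wording rather than a gap, and the overall structure of the proposal is sound.
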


\begin{corollary}\label{semisimple} Let $\rho$ be a semisimple
continuous $\ell$-adic representation of $G_K$ to $GL_n(F)$ unramified
outside a finite set of places of $K$.  Then there is a density one
set of places of $K$ at which $\rho$ is unramified and the
corresponding Frobenius conjugacy class is semisimple. 
\end{corollary}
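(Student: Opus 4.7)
The plan is to apply Theorem \ref{Algebraic-Chebotarev} to a carefully chosen closed subscheme of the algebraic monodromy group $G = G_\rho$, taken so that it absorbs every non-semisimple Frobenius but contains no full connected component.

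First I would note that $G$ is reductive. Indeed, since $\rho$ is semisimple and $\rho(G_K)$ is Zariski-dense in $G$, the standard representation $V = F^n$ is a semisimple $G$-module; decomposing $V = \oplus_i V_i$ into simple $G$-summands, the fact that the unipotent radical $U$ of $G^0$ is normal in $G$ and admits a non-zero fixed vector on each $V_i$ forces $V_i^U = V_i$ for all $i$. Hence $U$ acts trivially on $V$, and by faithfulness of $G \hookrightarrow GL(V)$ we conclude $U = 1$.

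Taking $M = G$ in Theorem \ref{Algebraic-Chebotarev}, I would set
\[
X := G \setminus G^{\mathrm{rs}},
\]
the complement of the regular semisimple locus of $G$. By the structure theory of (not necessarily connected) reductive algebraic groups in characteristic zero, $G^{\mathrm{rs}}$ is a Zariski-open subset of $G$ that meets every connected component $G^\phi$ in an open dense subset. Consequently $X$ is a closed subscheme of $G$, stable under the adjoint action of $G$ on itself, with $X \cap G^\phi \subsetneq G^\phi$ for every $\phi \in \Phi$. The set $\Psi$ of connected components wholly contained in $X$ is therefore empty, and Theorem \ref{Algebraic-Chebotarev} yields
\[
d\bigl(\{\, v \in \Sigma_u \mid \rho(\sigma_v) \in X(F) \,\}\bigr) \;=\; \frac{|\Psi|}{|\Phi|} \;=\; 0.
\]
Every non-semisimple element of $G(F)$ is in particular not regular semisimple and hence lies in $X(F)$, so this produces a density-one set of unramified places at which $\rho(\sigma_v)$ is semisimple.

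The step that demands the most care is the asserted density of $G^{\mathrm{rs}}$ in every connected component of the possibly non-connected reductive group $G$. For the identity component it is the classical theorem of Steinberg, while for a non-identity component $G^\phi = g_0 G^0$ one chooses $g_0$ quasi-semisimple, takes a maximal $\mathrm{Ad}(g_0)$-stable torus $T_0 \subset G^0$, and shows that a dense open subset of the coset $g_0 T_0$ consists of regular semisimple elements of $G$, whence their $G^0$-conjugates already cover a dense open subset of $G^\phi$.
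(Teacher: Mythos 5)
Your proposal is correct and takes essentially the same route as the paper, whose proof is a one-line citation of the two facts you establish: that $G=G_{\rho}$ is reductive, and that the semisimple elements of a reductive group contain a Zariski-dense open subset, after which Theorem~\ref{Algebraic-Chebotarev} gives $|\Psi|=0$. You have usefully made explicit the choice of conjugation-invariant closed subscheme (the complement of the regular semisimple locus) and the nontrivial point that this locus is dense in \emph{every} connected component of a possibly disconnected reductive group, both of which the paper leaves implicit.
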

\begin{proof} Since the  representations are assumed to be
  semi-simple,  the
algebraic monodromy groups are reductive algebraic group defined over
\( F \).  The corollary follows from the fact that 
the semisimple elements in a reductive group $G$ 
contain a Zariski dense open subset of $G$. 
\end{proof}

Let $m$ be a natural number. 
Consider the following Zariski closed, invariant subsets of $GL_n \times
GL_n$:
\[\begin{split}
X_m &:= \{ ( g_1 , g_2 ) \in GL_n \times GL_n ~| ~ {\rm{Trace}} (g_1^{m}
) = {\rm{Trace}} (g_2^{m} )\}\\
Y_m &:= \{ ( g_1 , g_2 ) \in GL_n \times GL_n ~| ~ {\rm{Trace}} (\Lambda^j(g_1^{m})
) = {\rm{Trace}} (\Lambda^j(g_2^{m} )), \quad j=1,\cdots, n\},
\end{split}
\]
where $\Lambda^j$ denotes the $j$-th exterior power representation of
$GL_n$. For semisimple elements, the condition that $(g_1,g_2)\in Y_m$,
is equivalent to saying that $g_1^m$ and $g_2^m$ are conjugate. 

Given the hypothesis of Theorem \ref{theorem:main} 
(resp. Theorem \ref{theorem:pe}), we have 
\[ \rho ( \sigma_v ) \in X_m (F) \quad  (\mbox{resp.} ~~\rho ( \sigma_v ) \in Y_m (F))
\quad \quad ~ \forall v \in T' , 
\] where $\rho=\rho_1\oplus \rho_2: G_K\to GL_n(F)\times GL_n(F)$ is
the direct sum of the representations $\rho_1$ and $\rho_2$. In what
follows we present the proof for Theorem \ref{theorem:main}, and make
remarks only as required for the proof of Theorem \ref{theorem:pe}.

Let $G$ denote the algebraic monodromy group of $\rho$. 
Since \( T \) is of positive upper density, by Theorem
\ref{Algebraic-Chebotarev} above, there exists a connected component
\( G^\phi \) of \( G \) such that \( G^\phi \) is contained in \( X_m
\).

We are led to consider the following problem in the context of
algebraic groups: let $G\subset GL_n\times GL_n$ be a reductive
algebraic group and $p_1, ~p_2$ denote the two projections. Assume
that the image $p_1(G)=G_1$ is connected. Suppose
that there is a connected component $G^{\phi}$ of $G$ contained inside
$X_m$. What can we conclude about the representations $p_1$ and $p_2$
of $G$? For the rest of this section, we will be following this
notation. 

We first observe the equivalence of the representations  
when  $G$ is connected:
\begin{proposition}\label{prop:connected}
With the above notation, suppose $G$ is connected and $G\subset
X_m$. Then the representations $p_1$ and $p_2$ are equivalent. 
\end{proposition}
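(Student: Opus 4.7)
The plan is to promote the trace equality on $m$-th powers to a trace equality on all of $G$, and then invoke the fact that semisimple representations over a field of characteristic zero are determined up to isomorphism by their characters.

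First I observe that $p_1$ and $p_2$ are semisimple as representations of the algebraic group $G$. Indeed, $\rho=\rho_1\oplus\rho_2$ is a semisimple representation of $G_K$ whose image is Zariski dense in $G$, so the natural $2n$-dimensional representation of $G$ on $F^n\oplus F^n$ is a semisimple $G$-module; hence its subrepresentations $p_1$ and $p_2$ are semisimple too.

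Next, consider the $m$-th power map $P_m\colon G\to G$, $g\mapsto g^m$. Its differential at the identity is multiplication by $m$ on $\mathrm{Lie}(G)$, which is invertible since $F$ has characteristic zero. Hence $P_m$ is \'etale at the identity, so the (constructible) image $P_m(G)$ contains a nonempty Zariski open subset of $G$; since $G$ is irreducible (being connected), $P_m(G)$ is Zariski dense in $G$. Because $G\subset GL_n\times GL_n$ is a subgroup, raising to the $m$-th power commutes with each projection, so for $g\in G$ we have $P_m(g)=\bigl(p_1(g)^m,p_2(g)^m\bigr)$.

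The hypothesis $G\subset X_m$ now reads $\mathrm{Tr}(p_1(h))=\mathrm{Tr}(p_2(h))$ for every $h\in P_m(G)$. Both sides are regular functions on $G$ agreeing on a Zariski dense subset of the irreducible variety $G$, so they are equal on all of $G$. Therefore $p_1$ and $p_2$ are semisimple representations of $G$ sharing a common character, whence $p_1\simeq p_2$.

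There is no serious obstacle; the only point to highlight is that the Zariski density of $P_m(G)$ relies essentially on the characteristic zero hypothesis, which is what makes the connectedness of $G$ (rather than merely containing a fixed connected component) the decisive ingredient. In the subsequent general case with only $G^{\phi}\subset X_m$ for some component, the analogous argument will break down precisely because $P_m$ restricted to a non-identity component need not have dense image, and this is where the examples of Serre and \ref{example:general} enter.
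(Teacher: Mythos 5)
Your proof is correct and follows essentially the same route as the paper: both arguments hinge on the dominance of the $m$-th power map on a connected group in characteristic zero, which forces $G\subset X_1$ (i.e., equality of characters), and then semisimplicity gives the isomorphism. You simply spell out the details (the \'etale-at-identity argument for dominance, the density/regular-function step, and the semisimplicity of the $p_i$) that the paper compresses into one sentence.
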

\begin{proof}
Since $G$ is connected, the $m$-th power map $x\mapsto x^m$ from $G$
to $G$ is dominant. Hence $G\subset X_1$, and the representations
$p_1$ and $p_2$ are equivalent.
\end{proof}
\begin{remark}\label{remark:error}
 The surjectivity of the $m$-th power map fails when we
  consider it between connected components. In \cite{PR} 
the argument continued as follows:
  it  can be assumed that $m$ is chosen such that the $m$-th power map
  sends $G^{\phi}$ to $G^0$, the connected component of identity in
  $G$. The image will be contained inside the subvariety $X_1$. Upon
  the {\em erroneous} assumption that the $m$-th power map is surjective,
  one concludes that $G^0\subset X_1$ and this implies potential
  equivalence of $\rho_1$ and $\rho_2$. But this assumption is wrong,
  as was pointed out by J.-P. Serre. 
\end{remark}

\subsection{A unitary trick} One of the advantages with the algebraic
formulation is that it allows base change to the field of complex
numbers, which makes it amenable to transcendental methods. Choosing an
isomorphism of the algebraic closure of $F$ with $\C$, we consider the
analogous problem over complex numbers. 

Let $U$ be a maximal compact subgroup of $G(\C)$ which we will assume is
contained inside $U(n)\times U(n)$, where $U(n)\subset GL_n(\C)$ is
the (standard) group of unitary $n\times n$ matrices. The group $U$ is
Zariski dense in $G$. Hence the intersection $U^{\phi}:=U\cap
G^{\phi}$ is Zariski dense in $G^{\phi}$. In particular, it is
non-empty.

The image $p_1(U)$ of the projection of $U$ to the first factor is a
maximal compact subgroup of $G_1(\C)$. Since by hypothesis $G_1$ is
connected, $p_1(U)$ is connected and equal to $p_1(U^0)$, 
where $U^0:=U\cap G^0$ is the connected component of $U$. Hence for any connected component $U^{\psi}$ of $U$, $p_1(U^{\psi})= p_1(U^0)$. In particular, 
the image  $p_1(U^{\phi})$ is a subgroup of $G_1(\C)$. 

Hence there exists an element of the form
$(I_n,j)\in U^{\phi}$, where $I_n$ denotes the identity
matrix in $GL_n(\C)$.  Since $G^{\phi}\subset X_m$, 
\[ n=\mbox{Tr}(I_n^m)= \mbox{Tr}(j^m).\]
Since the only unitary matrix in with trace equal to $n$ is the identity
matrix, we conclude that $j^m=I_n$. Thus, $G^{\phi}=G^0(I_n,j)$ with
$j^m=I_n$. Let $J=(I_n,j)$. The condition $G^{\phi}\subset X_m$,
translates to the condition $(xJ)^m\in X_m$ for any $x\in G^0$. 

\begin{remark} In \cite{Ra}, we considered the case $m=1$. In this
  case, we have   that $(I_n, I_n)\in G^{\phi}(\C)$, and this implies
  that $G^{\phi}=G^0$, and the representations $p_1$ and $p_2$ are
  isomorphic restricted to $G^0$. 
 Hence we conclude that $\rho_1$ and $\rho_2$ are
  potentially isomorphic. 
\end{remark}

For $m\geq 2$, the significance of the unitary trick lies in the
following crucial observation: 
\begin{equation}\label{eqn:main}
(xJ)^m=xJ\cdots xJ=xJxJ^{-1}J^2xJ^{-2}\cdots
J^{m-1}xJ^{-(m-1)}J^{m-1}J, \quad x\in G^0.
\end{equation}
Let $\theta(x)=JxJ^{-1}$ denote the automorphism of finite order (dividing $m$) 
of $G^0$ induced by the conjugation action of $J$. Since by the
unitary trick $J^m=(I_n, I_n)$,
the above equation becomes, 
\begin{equation}\label{eqn:theta}
 (xJ)^m=x\theta(x)\theta^2(x)\cdots\theta^{m-1}(x), \quad x\in G^0.
\end{equation}
The condition  $G^{\phi}\subset X_m$ now translates to the following  the first
representation is absolutely irreducible
condition on the representations $p_1$ and $p_2$ of $G^0$: 
\begin{equation}\label{eqn:thetatwist}
\mbox{Tr}(p_1(x)^m)=\mbox{Tr}\left(p_2(x\theta(x)\theta^2(x)\cdots
\theta^{m-1}(x))\right), \quad x\in G^0.
\end{equation}

\subsection{Example \ref{example:general}} \label{subsection:examplegeneral}
The above calculation can be reversed. We put ourselves in the context of Example \ref{example:general}. For $x\in Z$, 
\[ (xJ)^n=x\theta(x)\cdots \theta^{n-1}(x),\]
is $\theta$ invariant as $Z$ is abelian.  Since, by assumption the subgroup of $\theta$-invariants of $Z$ is finite (of order $m$), the element $xJ$ is of finite order.   Hence for any representation $\rho$ of $Z<J>$, the elements of the coset $xJ$ have order at most $mn$, where $m$ is the order of the subgroup of $\theta$-invariants of $Z$. 

\begin{remark}\label{remark:noncommute}
We make a remark, which we will use in deducing Part (iv) of Theorem \ref{theorem:appln}. 
If $\rho(Z)$ and $\rho(J)$ commute, the fact that $xJ$ is of finite order implies that $\rho(x)$ is of finite order for $x\in Z$. Since $Z$ is connected, its image is trivial and this contradicts the irreducibility of $\rho$. In particular, $\rho(Z)$ and $\rho(J)$ do not commute, i.e., there exists elements of $\rho(Z)$ which are not fixed by the conjugacy action of $\rho(J)$. 
\end{remark}

\subsection{Algebraic analogue of Theorem \ref{theorem:main}}\label{sec:alganalog}
In this section, we consider the subgroup of $G\subset GL(n)\times
GL(n)$ generated by 
$G^0$ and the the connected component $G^{\phi}$, as considered above.
By an abuse of notation, we continue to
denote this subgroup by $G$. Since $J$ has finite order, 
it is semisimple with eigenvalues roots of
unity. Hence $J$ and the  automorphism $\theta$ are
 defined over a finite extension $F'$ of
$F$. The group $G$ is defined over $F$, and 
over $F'$,  is isomorphic to the group
generated by $G^0$ and the element $J=(1,j)\in G(F')$. The conjugation
action by the element $(1,j)$ induces the automorphism  $\theta$ on
$G^0$ defined over $F'$. 
We decompose  $G$ (the decompositions are valid over any finite extension of $F$ containing $F'$) 
 with respect to the action of $\theta$ as
follows: 

\begin{enumerate}
\item The derived subgroup $G'$ of $G^0$  is a
connected semisimple group defined over $F$. 

\item  The
connected component $Z$ of the center of $G^0$ is invariant under $\theta$.  

The automorphism $\theta$
leaves stable the groups $G'$ and $Z$ (considered as subgroups over
$F'$), and there is a decomposition 
\[ G^0=G'Z\quad \mbox{and}\quad G'\cap Z ~~\mbox{is finite}.\]
We further decompose $Z$ with respect to the action of
$\theta$. 

Consider the lattice $X^*(Z)$ of characters of $Z$. 
Define endomorphisms of  $X^*(Z)$, 
\[ I_{\theta}= 1-\theta\quad \mbox{ and} \quad N_{\theta}=
1+\theta+\cdots+\theta^{m-1}.\]
Let $X^{\theta}$ be the subgroup of  $X^*(Z)$ on which $\theta$ acts
trivially and $ Z^{\theta}$ be the corresponding subtorus of
$Z$ defined over $F'$.
This is the maximial subtorus of $Z$ on which $\theta$ acts
trivially. 

\item Decompose the  space  $X^*(Z)\otimes \Q= X^{\theta}\otimes \Q\oplus
Y$, where $Y$ is the kernel of  $N_{\theta}$. Choose a lattice $L_0\subset Y$
and let $L_{\theta}=\sum_{i=0}^{m-1}\theta^i(L_0)$. The lattice
$L_{\theta}$ is $\theta$-invariant and is the character group of a
$\theta$-stable subtorus $Z_{\theta}$ of $Z$. 
The invariants of $\theta$ acting on $Z_{\theta}$ and
$G'Z^{\theta}\cap Z_{\theta}$ are  finite groups. 

\item  Let $Z_{\theta}<\!\!J\!\!>$ 
be the subgroup of $G$ generated by 
$Z_{\theta}$ and the element $J$.  
The groups $G'Z^{\theta}$ and $Z_{\theta}<\!\!J\!\!>$ are normal in
  $G$,  and there is a decomposition 
\[ G=G'Z^{\theta} (Z_{\theta}<\!\!J\!\!>)\quad \mbox{and}\quad 
G'Z^{\theta}\cap Z_{\theta}<\!\!J\!\!> ~~\mbox{is finite}.\]

\end{enumerate}

We have the following algebraic analogue of Theorem \ref{theorem:main}:
\begin{theorem} \label{theorem:main-alg}
Let $G$ be as above.   Suppose that the following condition is satisfied for
$x\in G^0$:
\begin{equation}\label{eqn:tracem}
 \mbox{Tr}(p_1(x^m))=\mbox{Tr}(p_2(x\theta(x)\cdots
  \theta^{m-1}(x))).
\end{equation}
With respect to the decomposition $G=G'Z^{\theta}  Z_{\theta}<\!\!J\!\!>$ given
above, the following hold: 
\begin{enumerate}
\item $\theta$ acts trivially on $G'Z^{\theta}$, and 
\[ p_1\!\mid_{G'Z^{\theta}}\simeq  p_2\!\mid_{G'Z^{\theta}}\]

\item $ p_1$ restricted to $Z_{\theta}<\!\!J\!\!>$ is trivial. 

\item Let $E$ be a finite extension of $F'$, over which the representation
$p_1$ of $G$ has an isotypical decomposition as given by Equations (\ref{eqn:isotyp1}) and (\ref{eqn:isotyp2}): 
\[ p_1\simeq \oplus_{i=1}^t p_{1,i} \simeq \oplus_{i=1}^t R_i\otimes p_{1,i}', \]
where the representations $p_{1,i}'$ are trivial of appropriate degree. The representation $p_2$ decomposes over $E$ as 
\[p_2\simeq \oplus_{i=1}^t p_{2,i}, \]
where $p_{2,i}$ are  representations of $G$ to
$GL_{V_i}$. 
For $1\leq i\leq t$, there exists 
representations $p_{2,i}'$ of $Z_{\theta}<\!\!J\!\!>$ into
$GL_{n_i/\mbox{dim}(R_i)}(E)$,  satisfying: 
\begin{enumerate}
\item As representations of the group $G'Z^{\theta}\times
  Z_{\theta}<\!\!J\!\!>$, 
\[p_{2,i}=R_i\otimes p_{2,i}'.\]
where  $R_i$ is considered as a representation of $G'Z^{\theta}$. 
\item The representations  $p_{1,i}' , ~p_{2,i}'$ satisfy, 
\begin{equation}\label{eqn:tracem-icomp}
n_i= \mbox{Tr}(p_{1,i}'(x^m))=\mbox{Tr}(p_{2,i}'(x\theta(x)\cdots
  \theta^{m-1}(x))), \quad x\in Z_{\theta}.
\end{equation}
Equivalently, for $y=xJ$ belonging to the coset $Z_{\theta}J$, 
\[n_i= \mbox{Tr}(p_{1,i}'(y^m))=\mbox{Tr}(p_{2,i}'(y^m)).\]
\end{enumerate}
\end{enumerate}
\end{theorem}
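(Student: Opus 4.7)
The plan is to exploit the two-factor structure $G^0\subset GL_n\times GL_n$ together with the unitary trick established earlier: since $J=(I_n,j)$ with $j^m=I_n$, conjugation by $J$ (i.e.\ $\theta$) acts trivially on the first coordinate, so $p_1\circ\theta=p_1$. Writing $x=(a,b)\in G^0$, the identity $(xJ)^m = N_\theta(x)\cdot J^m = N_\theta(x)$ gives $N_\theta(x)=(a^m,(bj)^m)$, so the hypothesis (\ref{eqn:tracem}) takes the concrete form
\[ \mbox{Tr}(a^m)=\mbox{Tr}((bj)^m)\quad\forall\,(a,b)\in G^0. \]

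First I would prove Part (2). Since $p_1(J)=I_n$ by construction, it suffices to show $p_1|_{Z_\theta}$ is trivial. By the very construction of $L_\theta$ as a lattice inside $\ker(N_\theta)\subset X^*(Z)\otimes\Q$, the norm $N_\theta$ carries $Z_\theta$ into a finite subgroup of $Z$. Consequently, $\mbox{Tr}(p_2(N_\theta(x)))$ takes only finitely many values as $x$ ranges over $Z_\theta$, and by the hypothesis $\mbox{Tr}(p_1(x^m))$ does too. Since $Z_\theta$ is a connected torus and $x\mapsto x^m$ is dominant, the regular function $\mbox{Tr}\circ p_1$ on the subtorus $p_1(Z_\theta)\subset GL_n$, being a sum of characters, must be the constant $n=\dim p_1$. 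A sum of characters of a torus identically equal to $n$ forces each character to be trivial, so $p_1|_{Z_\theta}$ is trivial, proving Part (2).

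Second I would prove Part (1). The key claim is that $\theta$ acts trivially on $G'$ (it is trivial on $Z^\theta$ by definition). Since $p_1\circ\theta=p_1$, the automorphism $\theta|_{G^0}$ descends to the identity on the quotient $G^0/\ker(p_1|_{G^0})$, so $\theta|_{G'}$ is a finite-order automorphism of a connected semisimple group that induces the identity on $p_1(G')=G_1'$. Invoking Theorem~\ref{theorem:fixedpoints} on fixed points of finite-order automorphisms of semisimple groups, together with the trace identity restricted to $G'$, one concludes that $\theta|_{G'}$ must in fact be trivial. Once $\theta$ is trivial on $G'Z^\theta$, the hypothesis specializes on this connected reductive subgroup to $\mbox{Tr}(p_1(x^m))=\mbox{Tr}(p_2(x^m))$; dominance of the $m$-th power map and semisimplicity of both representations yield $p_1|_{G'Z^\theta}\simeq p_2|_{G'Z^\theta}$, completing Part (1).

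Third I would deduce Part (3) from the isotypic decomposition. Given $p_1\simeq p_2$ on $G'Z^\theta$, the isotypic decomposition $p_1\simeq\oplus_i R_i\otimes p_{1,i}'$ of $p_1$ over $E$ transfers to a matching decomposition of $p_2$ as a $G'Z^\theta$-representation: $p_2\simeq\oplus_i R_i|_{G'Z^\theta}\otimes W_i$ for some multiplicity spaces $W_i$ of dimension $n_i/\mbox{dim}(R_i)$. Because $G'Z^\theta$ and $Z_\theta<\!\!J\!\!>$ are normal subgroups of $G$ with finite intersection, they commute up to that finite subgroup, so $Z_\theta<\!\!J\!\!>$ preserves each $G'Z^\theta$-isotypic component; applying Schur's lemma, $Z_\theta<\!\!J\!\!>$ acts on each $W_i$ via a representation $p_{2,i}'$, yielding the desired factorization $p_{2,i}\simeq R_i\otimes p_{2,i}'$. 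Tracking traces through the tensor product converts the hypothesis into Equation~(\ref{eqn:tracem-icomp}), and the equivalent form with $y=xJ$ follows from $(xJ)^m=N_\theta(x)$ when $J^m=I$.

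The main obstacle is the second step: ruling out a non-trivial action of $\theta$ on $G'$. The constraint $p_1\circ\theta=p_1$ only forces $\theta$ to be the identity modulo $\ker(p_1|_{G^0})$, which can contain a positive-dimensional semisimple factor of $G'$. Excluding a non-trivial $\theta$-action on such a factor is the heart of the argument and requires genuinely using Theorem~\ref{theorem:fixedpoints} in combination with the trace identity to rule out the existence of a semisimple direct factor of $G'$ on which $\theta$ could act without affecting $p_1$ yet be consistent with the trace identity forced by $p_2$.
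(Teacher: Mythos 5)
Your overall architecture tracks the paper's: isolate the automorphism $\theta$ on the connected component, show it acts trivially on $G'Z^\theta$, and then descend the trace identity through the isotypic decomposition. Parts (2) and (3) are sound (your Part (2) argument is in fact different from the paper's, which gives a purely structural character-lattice argument — since $L_\theta\subset\ker N_\theta$, the lattice has no non-zero $\theta$-invariant vectors, so the connected torus $Z_\theta$ has no non-trivial quotient torus on which $\theta$ acts trivially, and $p_1$ factors through such a quotient; your route via finiteness of $N_\theta(Z_\theta)$, dominance of $P_m$, and constancy of the character works too, though it needlessly invokes the hypothesis). Part (3) is essentially the paper's proof: normality of the two factors, the isotypic decomposition stable under $G$, Schur, and then linear independence of the characters $\mathrm{Tr}\,R_i$ after using surjectivity of $g\mapsto g^m$ on the connected group $G'Z^\theta$.

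The genuine gap is exactly where you flag it, in Part (1), and you have not closed it. Saying that Theorem~\ref{theorem:fixedpoints} ``together with the trace identity restricted to $G'$'' forces $\theta|_{G'}$ trivial is not an argument; the combination has to be made precise, and it is precisely the non-obvious step. The paper's mechanism is the following chain, and you should verify each link. Set $H$ to be the identity component of $\ker(p_1|_{G'})$; it is semisimple and $\theta$-stable since $p_1\circ\theta=p_1$. Let $H^\theta$ be the identity component of the $\theta$-fixed points in $H$. The crucial observation you are missing: \emph{on $H^\theta$ the twisted norm $x\theta(x)\cdots\theta^{m-1}(x)$ collapses to $x^m$}, because $\theta$ is the identity there. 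So the hypothesis~(\ref{eqn:tracem}) restricted to $H^\theta$, together with triviality of $p_1$ on $H\supset H^\theta$, reads $n=\mathrm{Tr}(p_1(x^m))=\mathrm{Tr}(p_2(x^m))$ for all $x\in H^\theta$, which is exactly the \emph{connected} situation, so Proposition~\ref{prop:connected} applies and $p_2|_{H^\theta}$ is trivial. But $p_1\oplus p_2$ is a faithful representation of $G'\subset GL_n\times GL_n$, and $p_1$ is already trivial on $H$, so $p_2$ is injective on $H$, hence on $H^\theta$; combined with triviality this forces $H^\theta=\{e\}$. Only now does Theorem~\ref{theorem:fixedpoints} enter: a non-trivial connected semisimple $H$ with a finite-order automorphism always has $H^\theta$ non-trivial, so $H$ must be trivial, i.e.\ $\ker(p_1|_{G'})$ is finite. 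Finally $g\mapsto\theta(g)g^{-1}$ maps the connected $G'$ into this finite kernel, hence is constant, hence $\theta|_{G'}=\mathrm{id}$. Your sketch stops one step short of this: you never isolate $H^\theta$, never notice that the twisted norm degenerates to $P_m$ there, and therefore never get to apply Proposition~\ref{prop:connected}, which is the engine that actually produces a contradiction out of the trace hypothesis.
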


\begin{remark} A similar analogous statement can be made for Theorem
  \ref{theorem:pe}, where the hypothesis is modified by considering
  the analogue of Equation (\ref{eqn:tracem}) for all exterior
  powers. Similarly the conclusion can be strengthened to say that
  the equality of Equation (\ref{eqn:tracem-icomp}) holds for all
  exterior powers. Since $Z_{\theta}$ is a torus, the elements in the
  image of  $Z_{\theta}<\!\!J\!\!>$ will be semisimple for any linear
  representation of  $Z_{\theta}<\!\!J\!\!>$. From this and the equality of
  traces for all exterior powers, one concludes 
  that the elements $p_{1,i}'(y^m)$ and
  $p_{2,i}'(y^m)$    are conjugate for $y=xJ\in Z_{\theta}J,
  ~x\in Z_{\theta}$.  
\end{remark}

\subsection{Proof of Parts (1) and (2) of Theorem 
\ref{theorem:main-alg}} 
Let $H$ denote the connected component of identity of 
$\mbox{Ker}(p_1\!\mid_{G'})$. 
The group $H$ is semisimple, and since $p_1=p_1\circ \theta$,  $H$ is
$\theta$-stable. Let  $H^{\theta}$ denote the identity connected component 
of the fixed points of $\theta$ acting on $H$. 

Restricted to
$H^{\theta}$, Equation (\ref{eqn:tracem}) gives the identity: 
\[ n=\mbox{Tr}(p_1(x^m))=\mbox{Tr}(p_2(x^m)),\quad x\in H^{\theta}, \]
where we have used the fact that 
$p_1$ restricted to $H$ is trivial. By Proposition
\ref{prop:connected}, the representation $p_2$ restricted to
$H^{\theta}$ is trivial. 
Since $G'\subset GL_n\times
GL_n$, the representation $p_2$ restricted to $H$, and in particular
to $H^{\theta}$ is injective. Hence $H^{\theta}$ is trivial. 

The following theorem is classical (see for instance
\cite[Chapter 8]{K}) or \cite{BM}), and we give a proof in our context
 in the following section):
\begin{theorem}\label{theorem:fixedpoints}
Let $H$ be a (non-trivial) connected semisimple group over $F$, 
and let $\theta$ be a finite order automorphism of $H$. Then
the connected component $H^{\theta}$ of the fixed points 
of $\theta$ is a non-trivial
reductive group. 
\end{theorem}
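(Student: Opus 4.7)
The statement is the conjunction of two independent assertions about the identity component $(H^\theta)^0$: that it is non-trivial, and that it is reductive. I would prove them separately, and the common tool for both would be Steinberg's theorem on the existence of a $\theta$-stable Borel pair $(B,T)$ in $H$. In characteristic zero it is legitimate to pass between the algebraic group $(H^\theta)^0$ and its Lie algebra $\mathfrak{h}^\theta=\mathrm{Lie}(H)^\theta$, and I will do so freely.

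\textbf{Non-triviality.} Fix a $\theta$-stable Borel subgroup $B\subset H$ and a $\theta$-stable maximal torus $T\subset B$. Then $\theta$ preserves the set $\Delta^\vee\subset X_*(T)$ of simple coroots attached to $(B,T)$, and hence permutes $\Delta^\vee$ as a finite set. For any $\theta$-orbit $\mathcal{O}\subset \Delta^\vee$, the orbit sum
\[
\lambda_{\mathcal{O}}:=\sum_{\alpha^\vee\in\mathcal{O}}\alpha^\vee\in X_*(T)
\]
is manifestly $\theta$-fixed, and is non-zero because distinct simple coroots are linearly independent. Since $H$ is semisimple, $\Delta^\vee$ is a $\mathbb{Q}$-basis of $X_*(T)\otimes\mathbb{Q}$, so such an orbit $\mathcal{O}$ exists. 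Therefore $X_*(T)^\theta$ has positive rank, the identity component of $T^\theta$ is a positive-dimensional subtorus of $H$, and $(H^\theta)^0$ is non-trivial.

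\textbf{Reductivity.} Work at the Lie algebra level. Since $\theta$ has finite order and $F$ has characteristic zero, $\theta$ acts semisimply on $\mathfrak{h}$, producing the eigenspace decomposition
\[
\mathfrak{h}=\mathfrak{h}^\theta\oplus\mathfrak{m},\qquad \mathfrak{m}=\bigoplus_{\zeta\neq 1}\mathfrak{h}_\zeta.
\]
Let $B$ be the Killing form of $\mathfrak{h}$; it is non-degenerate because $H$ is semisimple, and $\theta$-invariant by naturality. The identity $B(X,Y)=B(\theta X,\theta Y)=\zeta\zeta'B(X,Y)$ for $X\in\mathfrak{h}_\zeta$, $Y\in\mathfrak{h}_{\zeta'}$ shows $B(\mathfrak{h}_\zeta,\mathfrak{h}_{\zeta'})=0$ unless $\zeta\zeta'=1$; in particular $\mathfrak{h}^\theta\perp\mathfrak{m}$ and $B|_{\mathfrak{h}^\theta}$ is non-degenerate. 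To conclude reductivity, I would suppose for contradiction that the unipotent radical $\mathfrak{u}\subset\mathfrak{h}^\theta$ is non-zero. Then $\mathfrak{u}$ is an ideal of $\mathfrak{h}^\theta$ consisting of $\mathrm{ad}_\mathfrak{h}$-nilpotent elements, so $\mathrm{ad}_\mathfrak{h}(\mathfrak{u})\subset\mathfrak{gl}(\mathfrak{h})$ is a nilpotent subalgebra of nilpotent operators, and Engel's theorem gives $B|_\mathfrak{u}\equiv 0$. A short bookkeeping argument using the invariance of $B$ and the ideal property of $\mathfrak{u}$ in $\mathfrak{h}^\theta$ then promotes this isotropy to the full vanishing $B(\mathfrak{u},\mathfrak{h}^\theta)=0$, contradicting non-degeneracy of $B|_{\mathfrak{h}^\theta}$. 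Alternatively, the classical Borel--Mostow theorem can be invoked directly.

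\textbf{Main obstacle.} The reductivity step is the subtle one. Non-degeneracy of $B|_{\mathfrak{h}^\theta}$ is not in itself sufficient, since general quadratic Lie algebras need not be reductive; the argument must genuinely use that $B$ is the Killing form of the ambient semisimple $\mathfrak{h}$ together with the ideal structure of $\mathfrak{u}$ inside $\mathfrak{h}^\theta$ to force the contradiction. The non-triviality half, by contrast, is a clean combinatorial consequence of Steinberg's theorem and the observation that permuting a basis of simple coroots always leaves a non-zero invariant sum.
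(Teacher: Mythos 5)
Your argument is correct in substance but follows a genuinely different route from the paper's, and it is worth recording the contrast. The paper restricts to the context it actually needs ($F=\C$, $H\subset GL_n(\C)$, $\theta$ given by conjugation by a unitary matrix): reductivity is immediate because $\theta$ then commutes with the Cartan involution $A\mapsto {}^t\bar A^{-1}$, so $H^\theta$ is stable under a Cartan involution and hence reductive; non-triviality is proved by passing to (the complexification of) a compact form, decomposing the Lie algebra into $\theta$-eigenspaces, and showing that if the fixed eigenspace were zero then some non-zero eigenspace would consist of $\mathrm{ad}$-nilpotent elements, which is ruled out. Your proof works over any field of characteristic zero and for an arbitrary finite-order algebraic automorphism: the Steinberg $\theta$-stable Borel pair plus orbit-sums of simple coroots gives non-triviality by producing a positive-dimensional $\theta$-fixed subtorus, and reductivity comes from the non-degeneracy of the restricted Killing form. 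Your version buys generality and avoids any appeal to compact real forms; the paper's buys brevity in its special situation.

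One caution on the reductivity step. You are right that non-degeneracy of $B|_{\mathfrak{h}^\theta}$ alone is not a sufficient reason for reductivity (quadratic Lie algebras need not be reductive), and the "short bookkeeping argument" is in fact the entire content. What one actually uses is that $\mathrm{ad}_{\mathfrak{h}}|_{\mathfrak{h}^\theta}$ is a \emph{faithful} representation of $\mathfrak{h}^\theta$ whose trace form is non-degenerate; the relevant Bourbaki-type criterion (a Lie algebra in characteristic zero admitting a faithful finite-dimensional representation with non-degenerate trace form is reductive) is what closes the gap, not merely Engel's theorem applied to $\mathfrak{u}$. Equally important is that you take $\mathfrak{u}$ to be the Lie algebra of the unipotent radical of the algebraic group $(H^\theta)^0$: this is exactly what guarantees that $\mathfrak{u}$ consists of elements that are nilpotent in the ambient $\mathfrak{h}$, and not just $\mathrm{ad}_{\mathfrak{h}^\theta}$-nilpotent (for an abstract Lie subalgebra this implication would fail). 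With those two points made explicit, or with the direct citation to Borel--Mostow you offer, the argument is complete. Finally, to invoke Steinberg's theorem you should pass to $\overline{F}$; this is harmless since both reductivity and non-triviality of $(H^\theta)^0$ are geometric properties.
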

From this theorem and the triviality of $H^{\theta}$, it follows that 
$H$ is trivial, i.e., the kernel of $p_1\!\mid_{G'}$ is
finite. From the equality $p_1\circ \theta=p_1$, we get for $g\in G'$
that $\theta(g)=zg$ for some $z$ in kernel of $p_1$, i.e.,
$\theta(g)g^{-1}=z$.  The finiteness of
the kernel and the connectedness of $G'$ implies that $\theta$ acts
trivially on $G'$. 

Thus $\theta$ acts trivially on $G'Z^{\theta}$. Proposition
\ref{prop:connected} applied to Equation (\ref{eqn:tracem}) restricted
to $G'Z^{\theta}$ yields Part (1) of Theorem \ref{theorem:main-alg}. 

The proof of Part (2) of Theorem \ref{theorem:main-alg}, follows from
the fact that $\theta$ acts trivially on the first co-ordinate. Since
$Z_{\theta}$ has finite  $\theta$-invariants, being connected, it cannot have a
non-trivial quotient with trivial action of $\theta$. The element $J$
projects trivially to the first component by our choice of $J$. 

\subsection{Part (3) of Theorem \ref{theorem:main-alg}}
 By Part (1), after a change of
basis of one of the representations we can assume that 
\[ p_1\!\mid_{G'Z^{\theta}}= p_2\!\mid_{G'Z^{\theta}}\]
Let $E^n=\oplus_{i=1}^t V_i$ be the decomposition into isotypical
components of absolutely irreducible representations of 
the representation $ p_1\!\mid_{G'Z^{\theta}}$. These
components  are stabilized  by $G$ as
$G'Z^{\theta}$ is normal in $G$. 
Hence, considered as representations of the group $G'Z^{\theta}\times
Z_{\theta}<\!\!J\!\!>$, the representation $p_{2,i}$ can be
written as, 
\[ p_{2,i}\simeq \oplus_{j=1}^{n_j} R_i\otimes \eta_{ij},\]
for some collection of absolutely irreducible representations 
$\eta_{ij}$ of $Z_{\theta}<\!\!J\!\!>$. Define
\[ p_{2,i}'=\oplus_{j=1}^{n_j}\eta_{ij}.\]
This gives the decomposition in Part (3.a).

To prove Part (3.b), in terms of the decompositions, 
Equation (\ref{eqn:tracem}) can be written as, 
\[ \sum_{i=1}^t\mbox{Tr}(R_i(g^m))\mbox{Tr}(p_{1,i}'(x^m))=
\sum_{i=1}^t\mbox{Tr}(R_i(g^m))\mbox{Tr}(p_{2,i}'(x\theta(x)\cdots
  \theta^{m-1}(x))),\]
for $g\in G'Z^{\theta}$ and $x\in Z_{\theta}$. Since $G'Z^{\theta}$ is
connected, the map $g\mapsto g^m$ is surjective. The representations $R_i$ are
distinct irreducible representations of $G'Z^{\theta}$. Hence by the linear
independence of characters, we obtain Part (3.c) of Theorem
\ref{theorem:main-alg}. 

\subsection{Proof of Theorem \ref{theorem:fixedpoints}}
For the sake of completeness of exposition, we give a proof of Theorem
\ref{theorem:fixedpoints} in the following context which is sufficient
for our purpose (see \cite[Chapter 8]{K} for a more detailed and
complete exposition): we take $F=\C$, $G\subset GL(n,\C)$ 
and the automorphism $\theta$ is
induced by conjugation by an unitary matrix. In this case, $\theta$
commutes with the Cartan involution $\theta_c: A\mapsto
^t\!\bar{A}^{-1}$. Hence the Cartain involution fixes  $H^{\theta}$,
and it follows that $H^{\theta}$  is a
reductive group. 

To see that  the connected component $H^{\theta}$ of the fixed points 
of $\theta$ is  non-trivial, it suffices to work 
with a compact form of $H$. Let  ${\frak H}$ denote the
complexification of the Lie algebra of the compact form. We continue
to denote by $\theta$ the inducted action on the Lie
algebra. Decompose ${\frak H}=\oplus_{i=0}^{m-1}{\frak H}_i$ as
eigenspaces for the action of $\theta$ on ${\frak H}$, where 
${\frak H}_j=\{X\in {\frak H}\mid \theta(X)=\zeta_m^jX\}$. Here
$\zeta_m$ is a fixed choice of a primitive $m$-th root of unity. 

Let $I$ be the set of indices for which ${\frak H}_i$ is non-zero, and
let $k\in I$ be such that $\zeta_m^k$ is a generator for the subgroup of
the roots of unity
generated by $\zeta_m^j, ~j\in I$. For $X\in {\frak H}_k$ and $Y\in
{\frak H}_l$, 
\[
  \theta(ad(X)(Y))=\theta([X,Y])=[\theta(X),\theta(Y)]=\zeta_m^{k+l}[X,Y].\]
Suppose $0$ is not in $I$. For any $n$, $ad(X)^n(Y)$ belongs to
$\zeta_m^{nk+l}$-eigenspace of $\theta$. The assumption on $k$ implies
that for some $n$, $ad(X)^n$ will annihilate the space ${\frak
  H}_l$. Choosing $n$ appropriately, this implies that $ad(X)$ is
nilpotent on ${\frak H}$. 
 But then there are no such elements in ${\frak H}$. Hence
$0\in I$ and this implies that $H^{\theta}$ is non-trivial. 

\subsection{Images of connected components with respect to power maps}
Examples  \ref{example-serre} and \ref{example:general}  gives instances when the power map
$P_m: x\mapsto x^m$ from a connected component $G^{\phi}$ of $G$ to
the connected component of identity has image a singleton set. Such
instances occur when the connected component is a tori. 
 The observation given by Equation
(\ref{eqn:main}) allows us to conclude that the images of connected
components of semisimple algebraic groups with respect to the power
maps $P_m$ will not collapse. 
\begin{theorem}\label{theorem:imagepower}
Let $G$ be a semisimple algebraic group over 
an algebraically closed field $F$ of characteristic zero. 
 Let $G^{\phi}$ denote a connected
component of $G$. Suppose that the order of $G^{\phi}$ is $n$. For a
sufficiently large multiple $m$ of $n$, the image of  $G^{\phi}$ by
the power map $P_m: x\mapsto x^m$ contains a (Zariski open) neighbourhood of identity of a 
non-trivial connected reductive group. 
\end{theorem}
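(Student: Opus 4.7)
The plan is to locate a finite-order semisimple element $J \in G^{\phi}$, apply Theorem \ref{theorem:fixedpoints} to the finite-order automorphism $\theta = \mathrm{Int}(J)$ of $G^0$, and then use the computation of $(xJ)^m$ from Equation (\ref{eqn:main}) to exhibit $H := (G^0)^{\theta,0}$ inside the image $P_m(G^{\phi})$ for a suitable multiple $m$ of $n$.

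First, I would reduce to the case where $J$ is semisimple using Jordan decomposition: if $J = J_s J_u$ with $J_u$ unipotent, then $J_u$ lies in a connected one-parameter subgroup through the identity, hence in $G^0$, and so $J_s \in G^{\phi}$. Replace $J$ by $J_s$. Next, I would reduce to the case where $J$ has finite order. The Zariski closure $D := \overline{\langle J \rangle}^{\mathrm{Zar}}$ is a diagonalizable commutative subgroup of $G$. Since $F$ is algebraically closed of characteristic zero, Cartier duality applied to $X^*(D) \cong \Z^k \oplus (\mathrm{torsion})$ produces a splitting $D \cong \G_m^k \times F_0$ with $F_0$ finite abelian. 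Writing $J = tj$ accordingly with $t \in D^0 \subset G^0$ and $j \in F_0$, one has $j = J t^{-1} \in G^{\phi}$ of finite order $N$. Replace $J$ by $j$; since the class of $J$ in $G/G^0$ has order $n$, necessarily $n \mid N$.

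Applying Theorem \ref{theorem:fixedpoints} to the finite-order automorphism $\theta = \mathrm{Int}(J)$ of the connected semisimple group $G^0$ then yields a non-trivial connected reductive subgroup $H := (G^0)^{\theta,0}$. For any $m$ that is a multiple of $N$ (in particular a multiple of $n$) and any $x \in H$, Equation (\ref{eqn:main}) together with $\theta(x) = x$ and $J^N = e$ gives
\[ (xJ)^m = x\,\theta(x)\cdots\theta^{m-1}(x)\cdot J^m = x^m \cdot e = x^m. \]
Hence $P_m(G^{\phi}) \supseteq \{x^m : x \in H\}$. The differential of the $m$-th power map $H \to H$ at the identity is multiplication by $m$ on $\mathrm{Lie}(H)$, which is an isomorphism in characteristic zero; thus $P_m|_H$ is \'etale at $e$ and therefore open near $e$. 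Consequently, $P_m(G^{\phi})$ contains a Zariski open neighborhood of the identity in the non-trivial connected reductive group $H$, as required.

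The step I expect to be most delicate is the reduction to a finite-order representative of the coset $G^{\phi}$: this hinges on the splitting $D \cong D^0 \times F_0$ for the diagonalizable group generated by the semisimple $J$, which depends essentially on the characteristic-zero hypothesis through the behaviour of the torsion part of the character lattice. Once this finite-order reduction is in place, the remaining steps---invoking the paper's fixed-point theorem and the \'etaleness of the power map at the identity---are straightforward.
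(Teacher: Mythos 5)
Your proof is correct, and you reach the same essential endpoint as the paper — a finite-order element $j\in G^{\phi}$, then Theorem \ref{theorem:fixedpoints} applied to $\theta=\mathrm{Int}(j)$ on $G^0$, then Equation (\ref{eqn:main}) to show $P_m(G^{\phi})\supseteq\{x^m : x\in (G^0)^{\theta,0}\}$, and finally \'etaleness of $P_m$ at the identity. The genuine difference is how the finite-order representative of the coset is produced. The paper invokes Springer's result \cite[Corollary 2.14]{Sp} that $1\to \mathrm{Inn}(G^0)\to \mathrm{Aut}(G^0)\to \mathrm{Out}(G^0)\to 1$ splits, uses it (after modifying $x\in G^{\phi}$ by an element of $G^0$) to get a $j\in G^{\phi}$ whose conjugation action on $G^0$ has finite order, and then concludes $j$ has finite order from the finiteness of the centralizer of $G^0$ in $G$. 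You instead pass to a semisimple representative via Jordan decomposition, take the diagonalizable Zariski closure $D$ of the cyclic group it generates, and split off the finite part $F_0$ of $D \cong D^0\times F_0$ by Cartier duality; the $F_0$-component of your representative is the desired finite-order element, and $n\mid N$ follows because $D^0\subset G^0$. Your route is self-contained given the elementary structure theory of diagonalizable groups and sidesteps the (nontrivial) splitting theorem for the outer automorphism sequence; it is arguably cleaner than the paper's paragraph, whose phrasing (``the corresponding inner automorphism it defines on $G^0$ is trivial. Hence $j$ belongs to the center of $G^0$'') really means that a power of $j$ centralizes $G^0$ and that $Z_G(G^0)$ is finite. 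Both approaches are standard; the paper's sits more naturally in the Lie-theoretic framework it is already using, while yours is more elementary. After the finite-order element is found, the two arguments coincide.
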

\begin{proof} 
We show that there exists a torsion element $j$ in $G^{\phi}$ of order
$m$. Elements $x$ of $G^{\phi}$ induce automorphisms $\theta_x$  of
$G^0$ by conjugation, and two such automorphisms are related by an
inner automorphism. By \cite[Corollary 2.14]{Sp}
the sequence 
\[ 1\to \mbox{Inn}(G^0)\to  \mbox{Aut}(G^0)\to  \mbox{Out}(G^0)\to 1,\]
splits. Since  $\mbox{Out}(G^0)$ is finite, modifying $x$ by an
element in $G^0$, we get an element $j\in G^{\phi}$, such that the
corresponding inner automorphism it defines on $G^0$ is trivial. Hence
$j$ belongs to the center of $G^0$. Since the center of $G^0$ is
finite, this yields an element $j\in G^{\phi}$ of finite order.

Let $\theta_j(x)=jxj^{-1}$ be the automorphism induced on $G^0$  
by conjugation
by $j$. From Equation (\ref{eqn:theta}), the image of $P_m$ will
contain elements of the form
$x\theta(x)\theta^2(x)\cdots\theta^{m-1}(x)$, where $x$ ranges over the
elements of $G^0$. In
particular, the image will contain elements of the form $x^m$ for
$x\in (G^0)^{\theta}$, where $(G^0)^{\theta}$ denotes the connected
component of identity of the fixed points of $\theta$ acting on $G^0$.
By Theorem
\ref{theorem:fixedpoints}, this is a non-trivial connected reductive
group. The image of the map $x\mapsto x^m$ from $(G^0)^{\theta}$ to
itself is dominant and  contains a neighbourhood of identity of
$(G^0)^{\theta}$,
 and
this proves the theorem.

\end{proof}

\section{Proof of Theorem \ref{theorem:main}}
\subsection{Algebraic envelopes}\label{rmk:algmongp}
We first make some few remarks on the relationship between representations of abstract groups and their algebraic envelopes:
\begin{enumerate}

\item Suppose $\Gamma \subset G(\bar{F})$ is a Zariski dense subgroup and $\rho$ a representation of $G$. Then the isotypical decompositions $\rho\mid_{\Gamma}$ is the restriction to $\Gamma$ of the isotypical decomposition of $\rho$. This follows from the fact that a representation $R$ of $G$ is irreducible if and only if it is irreducible restricted to $\Gamma$. 

\item Suppose $\rho: \Gamma \to GL_n(\bar{F})$ is a representation with connected algebraic monodromy group $G$. If  $r: G\to H$ is a representation of algebraic groups, then the algebraic envelope of $r\circ \rho$ is the image group $r(G)$, and hence is also a connected group. In particular if
$\rho \simeq \oplus_{i=1}^t \rho_i$, then the algebraic envelopes of $\rho_i$ are connected. 

\item Suppose $\rho_1, ~\rho_2$ are two linear representations of $\Gamma$. The tensor product representation `factors' via the direct sum: $\gamma\mapsto (\rho_1(\gamma), \rho_2(\gamma))\mapsto  \rho_1(\gamma)\otimes\rho_2(\gamma)$. 
Further $\rho_1(\gamma)\otimes\rho_2(\gamma)$ is trivial if and only if both $\rho_1(\gamma)$ and $\rho_2(\gamma)$ are trivial. Hence the algebraic monodromy group of $\rho_1\otimes\rho_2$ and $\rho_1\oplus\rho_2$ 
are isomorphic. 
\end{enumerate}

\subsection{Proof of Theorem \ref{theorem:main}}
We deduce now Theorem \ref{theorem:main} from Theorem
\ref{theorem:main-alg}. From the arguments of Sections 2.2 and 2.3,
consider the group $\tilde{G}$ generated by $G^0$ and the connected
component $G^{\phi}$ (as is done in Section \ref{sec:alganalog}). The group
$\rho^{-1}(\tilde{G}(F))$ is of finite index in $G_K$ and is of the form 
$G_{K_1}$ for some finite extension $K_1$ of $K$. We apply Theorem
\ref{theorem:main-alg} to $\tilde{G}$ and let $E$ be the field given in Part
(3) of Theorem
\ref{theorem:main-alg}. 

The natural map $\pi$ from the product of
groups 
\[\pi: \tilde{G}'Z^{\theta}(E)\times Z_{\theta}<\!\!J\!\!>(E)\to \tilde{G}(E),\]
 has finite
kernel $A$. Choose a  $\theta$-stable open subgroup $M'$
(resp. $M_{\theta}$) of  $\tilde{G}'Z^{\theta}(E)$
(resp. $Z_{\theta}<\!\!J\!\!>(E)$), such that  $M'\times M_{\theta}$
intersects $A$ only at the identity element. Let  $M_{\theta}<\!\!J\!\!>$
denote the open subgroup of  $Z_{\theta}<\!\!J\!\!>(E)$ generated by
$M_{\theta}$ and $J$. This group is Zariski dense in $Z_{\theta}<\!\!J\!\!>$. 

The group  $M'\times M_{\theta}<\!\!J\!\!>$ maps isomorphically via $\pi$ to 
its image group denoted by $M$ in
$\tilde{G}(E)$. This is an open subgroup of $\tilde{G}(E)$. The intersection
$\rho(G_{K_1})\cap M$ is of finite index in  $\rho(G_{K_1})$, and is
of the form $\rho(G_L)$ for some finite extension $L$ of $K$. Thus we
can consider the map $\rho: G_L\to \tilde{G}(E)$ to factor 
via $\tilde{G}'Z^{\theta}(E)\times Z_{\theta}<\!\!J\!\!>(E)$. Composing the maps $R_i,
p'_{1,i}$ and $p'_{2,i}$ with the map $\rho$ defines respectively 
linear representations  $r_i, \rho'_{1,i}$ and $\rho'_{2,i}$ of
$G_L$. 

By the above remark, the isotypical decomposition of the representation 
$\rho_1$ restricted to $G_L$ is the restriction of the isotypical decomposition of $\rho_1$. Since the representations $R_i$ and $p_{2,i}'$ factor via different groups, provided one of them is non-trivial, they are not isomorphic. 

We need to only check the positive density upon restriction to $L$. 
From the construction, 
the algebraic monodromy of the collection of representations $\rho'_{2,i}$ is
$Z_{\theta}<\!\!J\!\!>$. The set of places $T_L$ is defined to be those places
of $L$ such that the image of the Frobenius conjugacy class by  the
representations  $\rho'_{2,i}$ lands inside the component
$Z_{\theta}J$ of  $Z_{\theta}<\!\!J\!\!>$, and hence it is of positive density
by Chebotarev density theorem. 
Theorem \ref{theorem:main} follows now from Theorem
\ref{theorem:main-alg}. 

\section{Proof of Theorem \ref{theorem:appln}}
We now prove Theorem \ref{theorem:appln}.

(1) If $\rho_1$ is absolutely irreducible, then $t=1$ and the
dimension of the  representations $\rho_{1,1}'$ and $\rho_{2,1}'$ is
one. Since they are also potentially equivalent, the values of
$\rho_{2,1}'$ lie in roots of unity of $F$, which is a finite
set. Hence the representations are potentially equivalent.

(2) Suppose that the algebraic monodromy group of $\rho_2$ is also connected. By 
Remark \ref{rmk:algmongp}, the algebraic monodromy groups of the components $\rho_{2,i}\simeq r_i\otimes \rho_{2,i}'$ are also connected for each   $i$. Further the algebraic monodromy group of the tensor product $r_i\otimes \rho_{2,i}'$ and $r_i\oplus \rho_{2,i}'$ are isomorphic. This implies that the algebraic monodromy group of $\rho_{2,i}'$ are also connected. 

Thus we are in the situation of the hypothesis of Theorem \ref{theorem:main}, where the first representation $\rho_{1,i}'$ is trivial and the second representation $\rho_{2,i}'$ has connected algebraic monodromy group. In this case, the algebraic monodromy group of $\rho_{1,i}'\oplus \rho_{2,i}'$ is isomorphic to that of $\rho_{2,i}'$, and  hence is connected. By Proposition \ref{prop:connected}, Part (2) of Theorem \ref{theorem:appln} follows. 

(3) The projection map takes a maximal torus in the algebraic monodromy group of $\rho_2$ to a maximal torus of the algebraic monodromy group of $r_i\otimes \rho_{2,i}'$ for each $i$. The latter monodromy group is isomorphic to the monodromy of 
$r_i\oplus \rho_{2,i}'$. If the ranks of the monodromy groups of $\rho_1$ and $\rho_2$ are equal, then for
  each $i$, the representations $\rho_{2,i}'$ are finite. This means
  that the representations $\rho_{2,i}'$ are potentially trivial and hence the representations $\rho_1$ and $\rho_2$ are potentially equivalent. 
  
(4) The representations $\rho_{2,i}'$ factors via the group $Z_{\theta}<J>(E)$.  By Remark \ref{remark:noncommute}, if $\rho_{2,i}'$ has an irreducible component of dimension at least two for some $i$, then there exists an element, say $u\in Z_{\theta}(E)$, such that $\rho_{2,i}'(u)$ is not invariant by the conjugacy action induced by $J$. Since $Z$ is abelian, this means that the stable commutant algebra of $\rho_{2,i}'$ is strictly larger than the the commutant algebra of $\rho_{2,i}'$. This implies that 
$C_K(\rho_2, F)\neq C_{\bar{K}}(\rho_2, F)$. 

Hence if the commutant algebra $C_K(\rho_2, F)$ is isomorphic to the stable commutant algebra  $C_{\bar{K}}(\rho_2, F)$, the representations $\rho_{2,i}'$ breaks up as a direct sum of one dimensional representations over $\bar{E}$. In particular $p_{2,i}'(Z_{\theta})$ and $p_{2,i}'(J)$ commute,  where $p_{2,i}'$ is the representation of $_{\theta}<J>$ through which $\rho_{2,i}'$ factors. 
But as remarked in Section \ref{subsection:examplegeneral}, this implies that the representations $p_{2,i}'$ is finite, and hence $\rho_{2,i}'$ is potentially finite.

This proves Theorem \ref{theorem:appln}.

\begin{acknowledgement} Our sincere thanks to J.-P. Serre for pointing
out an error in our earlier paper (\cite{PR}), and for his continued
interest which motivated us to persist with this question. We thank
the referee for useful suggestions.

The first author thanks the School of Mathematics, Tata Institute of Fundamental
Research for excellent work environment and hospitality. The first
author also thanks Mathematical Research Impact Centric Support
(MATRICS) grant under Science and Engineering Research Board,
Department of Science and Technology, Government of India for
supporting his travels that helped with this collaboration. 
\end{acknowledgement}

\end{document}